\documentclass[12pt,final,oneside]{amsart}

\usepackage{amssymb, amsmath, amsthm,geometry}
\usepackage[lithuanian,english]{babel}
\usepackage{mathrsfs} 

\usepackage{geometry}


\usepackage{bm}
\usepackage{subfig,tikz}
\usepackage{wrapfig}
\usepackage{color}
\usepackage{xspace}
\usetikzlibrary{shapes.geometric}

\definecolor{darkblue}{rgb}{0.0, 0.0, 0.55}

\usepackage{graphicx}
\usepackage[colorlinks=true, citecolor=black, linkcolor=black, urlcolor=black]{hyperref}
 \usepackage{xcolor}
\usepackage{hyperref}

\usepackage{lineno}

  \textheight = 9in
  \textwidth = 5.5in

  \hoffset = 0pt 
\marginparsep = 0pt
\marginparwidth = 0pt
\oddsidemargin = 0.5in

  \voffset = -0.2in
 \topmargin = 0pt 
   \headsep = 0.1in
 \headheight = 0.1in
 \footskip = 0pt

\usepackage{ifdraft}
\ifoptionfinal{
\usepackage[disable]{todonotes}
}
{
\usepackage[norefs, nocites]{refcheck}

\usepackage[notref, notcite]{showkeys}
\usepackage[bordercolor=white, color=white]{todonotes}
}


\newtheorem{lemma}{Lemma}
\newtheorem{theorem}{Theorem}
\newtheorem{corollary}{Corollary}

\theoremstyle{remark}
\newtheorem{example}{Example} 
\newtheorem{remark}{Remark} 
\theoremstyle{theorem}
\newtheorem{hypothesis}{Hypothesis}

\def\R{\mathbb R}

\def\N{\mathbb N}

\def\g{\bar{g}}
\def\M{\mathcal M}

\def\p{\partial}

\DeclareMathOperator{\supp}{supp}
\DeclareMathOperator{\dist}{dist}


\title[The Light ray transform in Lorentzian geometries]{The Light ray transform in Stationary and Static Lorentzian geometries}
\author[Feizmohammadi]{Ali Feizmohammadi}
\address{Department of Mathematics, University College London, 
Gower Street, London UK, WC1E 6BT.}
\email{a.feizmohammadi@ucl.ac.uk}
\author[Ilmavirta]{Joonas Ilmavirta}
\address{Department of Mathematics and Statistics, University of Jyv\"{a}skyl\"{a}, P.O. Box 35 (MaD), Finland}
\email{joonas.ilmavirta@jyu.fi}
\author[Oksanen]{Lauri Oksanen}
\address{Department of Mathematics, University College London, 
Gower Street, London UK, WC1E 6BT.}
\email{l.oksanen@ucl.ac.uk}

\begin{document}
\begin{abstract}
Given a Lorentzian manifold, the light ray transform of a function is its integrals along null geodesics. This paper is concerned with the injectivity of the light ray transform on functions and tensors, up to the natural gauge for the problem. First, we study the injectivity of the light ray transform of a scalar function on a globally hyperbolic stationary Lorentzian manifold and prove injectivity holds if either a convex foliation condition is satisfied on a Cauchy surface on the manifold or the manifold is real analytic and null geodesics do not have cut points. Next, we consider the light ray transform on tensor fields of arbitrary rank in the more restrictive class of static Lorentzian manifolds and show that if the geodesic ray transform on tensors defined on the spatial part of the manifold is injective up to the natural gauge, then the light ray transform on tensors is also injective up to its natural gauge. Finally, we provide applications of our results to some inverse problems about recovery of coefficients for hyperbolic partial differential equations from boundary data.
\end{abstract}
\maketitle

\section{Introduction}
\label{intro}
Let $(\mathcal N,\g)$ be a smooth globally hyperbolic Lorentzian manifold of dimension $1+n$ with signature $(-,+,\ldots,+)$. For a review of the notion of global hyperbolicity we refer the reader to \cite[Chapter 14]{On}. For the purposes of this paper, it suffices to recall that such manifolds have a Cauchy hypersurface, that is a hypersurface which is intersected by any causal curve exactly once. We are interested in studying the injectivity of the so called light ray transform on functions and tensors over such Lorentzian manifolds.  

To formulate the problem precisely we introduce some notations. For each $m=0,1,\ldots$, let $\mathcal S^m=\mathcal S^m(\mathcal N)$ denote the vector bundle of symmetric tensors of rank $m$ on $\mathcal N$. In local coordinates each $\alpha \in C^{\infty}_c(\mathcal N;\mathcal S^m)$ can be written as 
$$
\alpha(y,w) = \alpha_{j_1 \dots j_m}(y) w^{j_1} \dots w^{j_m}, \quad\forall\, (y,w) \in T\mathcal N,
$$
where we are using the Einstein summation convention. Next, let $\beta$ be a maximal (that is inextensible) null geodesic in $(\mathcal N,\g)$, namely a geodesic whose tangent vector at each point is light-like: 
\begin{align}\label{affine} \nabla^{\g}_{\dot{\beta}(s)}\dot{\beta}(s)=0, \quad \text{and}\quad \g(\dot{\beta}(s),\dot{\beta}(s))=0.\end{align}
Observe that equation \eqref{affine} defines the parametrization of $\beta$ uniquely up to a group of affine re-parametrizations. Given any choice of such parametrization, we define the light ray transform of $\alpha \in  C^{\infty}_c(\mathcal N;\mathcal S^m)$ along $\beta$ as follows:
\begin{align}\label{lightraydef} \mathcal L_\beta \alpha = \int_\R \alpha(\beta(s),\dot{\beta}(s))\,ds.\end{align}
Note that, by global hyperbolicity the null geodesic $\beta(s)$ will lie outside of any compact set $K\subset \mathcal N$, for $|s|$ large enough (see \cite[Lemma 13, p. 408]{On}), and therefore the integral in \eqref{lightraydef} is well-defined for compactly supported $\alpha$. Note also that the domain of integration in \eqref{lightraydef} is also justified even when $\beta$ is not complete since $\alpha$ is compactly supported. Finally, observe that an affine reparametrization of $\beta$ results in the integral \eqref{lightraydef} to be scaled. Together with the linearity of the map $\mathcal L$, this implies that the choice of the parametrization is of no significance provided that we are concerned with injectivity of the light ray transform on $\mathcal N$.


\subsection{The case of stationary geometries}
\label{stationarycase}

The first result in our paper is concerned with the injectivity of the light ray transform on scalar functions under the additional assumption that $(\mathcal N,\g)$ is {\em stationary}, in the sense that there exists a smooth complete time-like Killing vector field. Let $N \subset \mathcal N$ denote a fixed Cauchy hypersurface in $\mathcal N$, write $g=\bar{g}|_{N}$ and observe that $(N,g)$ is a Riemannian manifold. It is well-known (for example \cite[Lemma 3.3]{JS}) that the manifold $(\mathcal N,\g)$ admits a canonical isometric embedding $\Phi: \R\times N \to \mathcal N$ with
\begin{align}\label{stationary} \Phi^*\g= -(\kappa-|\eta|_g^2)\,dt^2+dt\otimes \eta+\eta\otimes dt+g,\end{align}
where $\kappa$ is a smooth positive function on $N$ and $\eta$ is a smooth co-vector field on $N$. For the convenience of the reader we show this in Section~\ref{stationary_structure}. In the more restrictive case where additionally the one-form $\eta$ in \eqref{stationary} vanishes identically in $N$, we call the manifold $\mathcal N$ to be {\em static}. 

In the setting of stationary geometries introduced above, we prove injectivity of the light ray transform on scalar functions under either one of the two hypotheses that we will formulate next. To simplify the statement of these hypotheses, we only consider injectivity of the light ray transform among functions that are compactly supported in a submanifold $\M \subset \mathcal N$, given by $\M=\Phi(\R\times M)$, where $M\subset N$ is a compact submanifold of dimension $n$ and with a smooth boundary $\p M$. Consequently, we are studying the injectivity of the light ray transform on the Lorentzian manifold $\M$.

To state the first hypothesis, we recall some concepts from Lorentzian geometry, namely, the notion of {\em time-separation} and {\em null cut locus}. The time-seperation function, $\tau(p,q)$, between two points $p$ and $q$ is defined as the supremum of the semi-Riemannian length of all future pointing causal (non-spacelike) curves connecting $p$ to $q$ and zero if there is no such path. Next, let $p \in \mathcal M$ and $\beta:I\to \M$ denote a future pointing null geodesic with $\beta(0)=p$ and set 
$$s_0=\sup\{s \in I \,|\, \tau(p,\beta(s))=0\}.$$ If $s_0 \in I^{\text{int}}$, we call $\beta(s_0)$ the future null cut-point of $p$ along $\beta$ (see \cite[Section 5]{BE}). The null cut locus $C_N^+(p)$ is then defined as the set of all of all future null cut points of $p$.
   
\begin{hypothesis}
\label{hypo0}
The Lorentzian manifold $(\mathcal N,\g)$, the Cauchy hypersurface $N$ and the Killing vector field are real analytic. Moreover, $C^+_N(p)=\emptyset$ for all $p \in \M$.  
\end{hypothesis}

Before stating the second hypothesis, we need to make more definitions. We introduce the conformally scaled metric
\begin{align}\label{conf} \Phi^*\g_c= -dt^2+dt\otimes \eta_c+\eta_c\otimes dt+g_c,\end{align} 
where $\eta_c=c\eta$, $g_c=cg$ and $c=\frac{1}{\kappa-|\eta|_g^2}.$ 
Next, we define $\mathscr G$ to be the set of smooth curves $b$ on $M$ that satisfy the following ordinary differential equation:
\begin{align}
\label{generalcurve}
\nabla^{g_c}_{\dot b}\dot{b}=G(b,\dot b),
\end{align}
subject to the initial data $(b(0),\dot b(0))\in TM$. The function $G(z,v)$ is defined for each $(z,v) \in TM$ as follows: 
\begin{align}
\label{G_def}
G(z,v) = -(\frac{\kappa-|\eta|_{g}^2}{\kappa})\left((\nabla^{g_c}_{v}\eta_c)\,v\right)\,\eta_c^{\sharp}-(\eta_c v + \sqrt{(\eta_c v)^2 + |v|_{g_c}^2}) \, F(z,v).
\end{align}
Here $\eta_c^\sharp$ is the vector dual to $\eta_c$ with respect to $g_c$ and the terms $(\nabla^{g_c}_{v} \eta_c)\,v$ and $\eta_c v$ denote the natural pairing between the one forms $\nabla^{g_c}_{v} \eta_c$ and $\eta_c$ with the vector $v$ respectively. All the terms in \eqref{G_def} are evaluated at the point $z \in M$. Finally, the term $F(z,v)$ is the vector field defined through
$$ F(z,v) = d\eta_c(\cdot,v)^{\sharp} - \left(\frac{\kappa-|\eta|_{g}^2}{\kappa}\right) d\eta_c(\eta_c^{\sharp},v)\, \eta_c^{\sharp}.$$ 

The second hypothesis relies on a notion of foliation by a family of {\em strictly convex} hypersurfaces with respect to curves in $\mathscr G$, and can be stated as follows.
\begin{hypothesis}
\label{hypo1}
The dimension $n$ of $M$ satisfies $n \geq 3$, and there is a function $\rho:M\to [0,l]$, so that the following conditions hold: 
\begin{itemize}
\item[(i)]{$d\rho \neq 0$ when $\rho>0$, $\rho^{-1}(l)=\p M$ and $\rho^{-1}(0)$ has empty interior.}
\item[(ii)]{For any $b \in \mathscr G$, if $\frac{d}{dt} \rho(b(t))=0$, then $\frac{d^2}{dt^2}\rho(b(t))>0$.}
\end{itemize}
\end{hypothesis}
In Section~\ref{thm1section}, we provide an example of such manifolds. The main theorem can now be stated as follows.
\begin{theorem}
\label{t1}
Let $(\mathcal N,\g)$ be a stationary globally hyperbolic Lorentzian manifold of dimension $1+n$. Let $\Phi$ be an embedding satisfying \eqref{stationary} and let $\M=\Phi(\R\times M)$ where $M$ is a compact $n$ dimensional submanifold of $N$ with smooth boundary such that Hypothesis~\ref{hypo0} or Hypothesis~\ref{hypo1} holds. Then the light ray transform in $(\M,\g)$ is injective on scalar functions. In other words, given any $f \in C^\infty_c(\M)$, there holds: 
$$ \mathcal L_\beta\,f =0 \quad \text{for all maximal $\beta$ in $\M$} \implies f \equiv 0.$$
\end{theorem}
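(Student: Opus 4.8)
The plan is to reduce the light ray transform on $(\M,\g)$ to a one-parameter family (indexed by a frequency $\sigma$) of attenuated ray transforms along the curves $\mathscr G$ on the spatial Riemannian manifold $(M,g_c)$, and then to establish injectivity of each of these transforms --- via an analytic microlocal argument under Hypothesis~\ref{hypo0} and via the convex foliation (layer stripping) method under Hypothesis~\ref{hypo1}.

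First I would pass to the conformally rescaled metric $\g_c$ of \eqref{conf}. Since null geodesics are conformally invariant as unparametrized curves and $\g=c^{-1}\g_c$ with $c=1/(\kappa-|\eta|_g^2)>0$ depending on the spatial variable only, a change of affine parameter shows that $\mathcal L_\beta f=0$ along all maximal null geodesics of $\g$ if and only if the light ray transform of $c^{-1}f$ along all maximal null geodesics of $\g_c$ vanishes; as $c^{-1}>0$ and $c^{-1}f\in C^\infty_c(\M)$, it suffices to prove injectivity for $\g_c$. Next I would check, by a direct computation using that $\p_t$ is Killing, that a future pointing curve $s\mapsto(t(s),b(s))$ is a null geodesic of $\g_c$ exactly when $b\in\mathscr G$ and $\dot t=\eta_c(\dot b)+\sqrt{(\eta_c\dot b)^2+|\dot b|_{g_c}^2}$ --- the spatial part of the geodesic equation producing precisely \eqref{generalcurve}--\eqref{G_def}, and the conserved $\p_t$-momentum accounting for the remaining equation. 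Consequently every $b\in\mathscr G$, together with all of its time translates, lifts to maximal null geodesics of $\g_c$. Given $f\in C^\infty_c(\M)$ with $\mathcal L_\beta f\equiv 0$, I would fix $b\in\mathscr G$, parametrize its lift by the affine parameter $r$, apply $\mathcal L$ to the translates $r\mapsto(t(r)+a,b(r))$, and take the Fourier transform in $a\in\R$ to obtain, for every $\sigma\in\R$,
\begin{equation*}
\int_\R e^{i\sigma t(r)}\,\widehat f(\sigma,b(r))\,dr=0,\qquad \widehat f(\sigma,x):=\int_\R e^{-i\sigma t}f(t,x)\,dt,
\end{equation*}
where $t(r)$ is the accumulated time along $b$. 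Thus everything reduces to injectivity, for each fixed $\sigma$, of the attenuated ray transform $u\mapsto\int e^{i\sigma t(r)}u(b(r))\,dr$ over $\mathscr G$ acting on functions compactly supported in $\inter M$; once this is known, $\widehat f(\sigma,\cdot)=0$ for all $\sigma$ and $f\equiv 0$ by Fourier inversion.

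For this last step I would argue separately. Under Hypothesis~\ref{hypo1}, conditions (i)--(ii) say exactly that the level sets of $\rho$ form a strictly convex foliation with respect to $\mathscr G$, so I would run the local-to-global scheme for ray transforms along general families of curves admitting such a foliation: local injectivity near each strictly convex point through an Uhlmann--Vasy type normal-operator (scattering calculus) estimate --- which uses $n\ge 3$ and is unaffected by the weight $e^{i\sigma t(r)}$, a fixed factor of order one in the velocity that for scalar unknowns introduces no gauge --- followed by sweeping inward along the foliation to recover $\widehat f(\sigma,\cdot)$ on $M\setminus\rho^{-1}(0)$, hence everywhere by continuity since $\rho^{-1}(0)$ has empty interior. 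Under Hypothesis~\ref{hypo0}, real analyticity of $(\mathcal N,\g)$, $N$ and the Killing field makes $g_c$, $\eta_c$, the family $\mathscr G$, and the weight real analytic away from the zero section (note $(\eta_c v)^2+|v|_{g_c}^2>0$ for $v\ne 0$), while $C^+_N(p)=\emptyset$ rules out conjugate points for $\mathscr G$-curves inside $\M$; then the normal operator of the attenuated $\mathscr G$-transform is an analytic pseudodifferential operator, elliptic on all of $T^*M\setminus 0$, so $\mathrm{WF}_A(\widehat f(\sigma,\cdot))=\emptyset$, forcing $\widehat f(\sigma,\cdot)$ to be real analytic and, being compactly supported, identically zero.

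The main obstacle is the last step: injectivity of the ray transform along the genuinely non-geodesic family $\mathscr G$. I expect the work to lie in (a) verifying that Hypothesis~\ref{hypo1} is precisely the convexity needed for the general-curve layer-stripping argument and that the attenuation $e^{i\sigma t(r)}$ with the non-polynomial factor $\sqrt{(\eta_c v)^2+|v|_{g_c}^2}$ causes no loss of regularity in the normal operator (it is smooth, indeed analytic, off $v=0$); and (b) in the analytic setting, constructing the analytic-microlocal normal operator for $\mathscr G$ and transferring the ``no null cut point'' hypothesis on $\M$ into the non-degeneracy of the $\mathscr G$-flow on $M$ required to keep that operator an elliptic analytic pseudodifferential operator.
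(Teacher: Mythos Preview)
Your reduction coincides with the paper's through the Fourier--slice step: conformal rescaling to $\g_c$, identification of the spatial projections of null geodesics with the family $\mathscr G$ via \eqref{generalcurve}--\eqref{G_def}, time--translation invariance, and the identity $\int e^{i\sigma\, t(r)}\,\widehat f(\sigma,b(r))\,dr=0$ for every $b\in\mathscr G$ and every $\sigma\in\R$. The two arguments diverge thereafter. The paper does \emph{not} invoke injectivity of an attenuated transform at each frequency; instead it differentiates the Fourier--slice identity in $\sigma$ at $\sigma=0$ and runs a short induction: at order $k$ the identity reads $\mathscr I\bigl(\partial_\sigma^k\widehat f(0,\cdot)\bigr)=0$ modulo terms containing $\partial_\sigma^j\widehat f(0,\cdot)$ with $j<k$, already shown to vanish. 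Injectivity of the \emph{unweighted} transform $\mathscr I$ then forces $\partial_\sigma^k\widehat f(0,\cdot)=0$ for all $k$, and analyticity of $\sigma\mapsto\widehat f(\sigma,\cdot)$ (from compact support in $t$) yields $f\equiv0$. The payoff is that only the unweighted $\mathscr I$ need be injective, so under Hypothesis~\ref{hypo1} one quotes the Uhlmann--Vasy/Zhou local result for general curve families verbatim, and under Hypothesis~\ref{hypo0} one quotes the Frigyik--Stefanov--Uhlmann theorem for analytic curves without conjugate points, again in its stated (weight $w(x,v)$) form. Your route is sound in principle, but the weight $e^{i\sigma t(r)}$ is of \emph{attenuated} type --- an exponential of an integral along the curve --- rather than a multiplicative weight on $SM$, so it does not sit inside the literal hypotheses of those references; you would have to either recast it as a function on $SM$ via the backward flow to $\partial M$ (which needs transversality at the boundary, not granted by Hypothesis~\ref{hypo0}) or redo the normal--operator analysis with this weight and check that ellipticity survives because $|e^{i\sigma t}|\equiv1$. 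Both approaches correctly isolate the passage ``$C^+_N(p)=\emptyset\Rightarrow$ no conjugate points along $\mathscr G$--curves'' as the geometric crux under Hypothesis~\ref{hypo0}; the paper proves this as a separate lemma by lifting a hypothetical $\mathscr G$--Jacobi degeneration to a causal variation of null geodesics in $\M$ and deriving a contradiction with $\tau(p,\cdot)\equiv0$.
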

\bigskip
Although Hypotheses~\ref{hypo0}--\ref{hypo1} and Theorem~\ref{t1} are stated on spatially compact submanifolds $\M$ of $\mathcal N$, we can immediately obtain the following global corollary.
\begin{corollary}
\label{cor1}
Let $(\mathcal N,\g)$ denote a globally hyperbolic stationary Lorentzian manifold of dimension $1+n$ with $n\geq 3$, such that there exists a non-compact Cauchy hypersurface $N$. Let $\Phi$ be an embedding satisfying \eqref{stationary} and suppose that there exists a function $\rho:N\to [0,\infty)$, such that Hypothesis~\ref{hypo1} holds on each $M_l=\{\rho\leq l\}$ with respect to the function $\rho|_{M_l}$. Then the light ray transform on $(\mathcal N,\g)$ is injective on scalar functions.  
\end{corollary}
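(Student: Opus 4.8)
The plan is to reduce to Theorem~\ref{t1} by exhausting $N$ with the compact sublevel sets $M_l=\{\rho\leq l\}$ and the associated spatially compact slabs $\M_l:=\Phi(\R\times M_l)$. Suppose $f\in C^\infty_c(\mathcal N)$ satisfies $\mathcal L_\beta f=0$ for every maximal null geodesic $\beta$ of $\mathcal N$. Since $\rho$ is finite valued, the open sets $\{\rho<l\}$, $l>0$, form a nested cover of $N$, so the compact set $\pi_N(\Phi^{-1}(\supp f))$ (with $\pi_N\colon \R\times N\to N$ the projection) lies inside some $\{\rho<l\}$; hence $\supp f\subset\Phi(\R\times\{\rho<l\})=\inter\,\M_l$, i.e.\ $f\in C^\infty_c(\M_l)$. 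Fix this $l$. By hypothesis $M_l$ is a compact $n$-dimensional submanifold of $N$ with smooth boundary on which Hypothesis~\ref{hypo1} holds, so $\M_l$ is of the type covered by Theorem~\ref{t1}; it therefore suffices to show that $\mathcal L_\beta f=0$ for every maximal null geodesic $\beta$ of $(\M_l,\g)$, after which Theorem~\ref{t1} forces $f\equiv 0$.

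The key point is that every maximal null geodesic of $\mathcal N$ meets $\M_l$ in a single connected arc. To see this I would use the fact --- established in the proof of Theorem~\ref{t1}, and the reason Hypothesis~\ref{hypo1} is phrased as it is --- that the spatial projection $b=\pi_N\circ\Phi^{-1}\circ\tilde\beta$ of a maximal null geodesic $\tilde\beta$ of $\mathcal N$ is, after the reparametrization of \eqref{generalcurve}, a curve in $\mathscr G$. I then claim that $\rho\circ b$ has no interior local maximum: at a putative interior maximum $t_0$ one has $\tfrac{d}{dt}\rho(b(t))|_{t_0}=0$ and, for $t$ near $t_0$, the curve $b$ remains in the sublevel set $M_c$ with $c=\rho(b(t_0))$, so $b$ is there a curve in $\mathscr G$ relative to $M_c$ and Hypothesis~\ref{hypo1}(ii) applied on $M_c$ gives $\tfrac{d^2}{dt^2}\rho(b(t))|_{t_0}>0$, contradicting that $t_0$ is a maximum; the degenerate value $c=0$ is excluded using Hypothesis~\ref{hypo1}(i). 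Consequently every sublevel set of $\rho\circ b$, and in particular $\tilde\beta^{-1}(\M_l)$, is an interval.

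Finally, given a maximal null geodesic $\beta$ of $(\M_l,\g)$, I would extend it to a maximal null geodesic $\tilde\beta$ of $\mathcal N$ with $\tilde\beta|_{I}=\beta$ on the domain $I$ of $\beta$. By the single-arc property and the maximality of $\beta$ in $\M_l$, the set $\tilde\beta^{-1}(\M_l)$ equals $I$ up to endpoints; since $\supp f\subset\M_l$, the integrand defining $\mathcal L_{\tilde\beta}f$ in \eqref{lightraydef} vanishes off $I$, whence $\mathcal L_\beta f=\mathcal L_{\tilde\beta}f=0$. Thus $\mathcal L_\beta f$ vanishes along every maximal null geodesic of $\M_l$, and Theorem~\ref{t1} applied to $(\M_l,\g)$ (using $n\geq 3$ and Hypothesis~\ref{hypo1}) yields $f\equiv 0$. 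The one step that requires real care, and the main obstacle, is the single-arc property of the second paragraph: the strict convexity in Hypothesis~\ref{hypo1}(ii) is precisely what prevents a null geodesic of $\mathcal N$ from leaving $\M_l$ and re-entering it later, which would otherwise only give control of a sum of light ray transforms over several chords rather than each chord individually.
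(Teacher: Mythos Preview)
Your argument is correct and follows the paper's own one-line reduction to Theorem~\ref{t1} on $\M_l=\Phi(\R\times M_l)$ once $l$ is chosen large enough to contain the spatial projection of $\supp f$. You go further than the paper by explicitly justifying, via the strict convexity in Hypothesis~\ref{hypo1}(ii), that a maximal null geodesic of $\mathcal N$ meets $\M_l$ in a single arc---this is exactly what is needed to pass from the hypothesis on $\mathcal N$ to the hypothesis of Theorem~\ref{t1} on $\M_l$, and the paper leaves it implicit; one small remark is that your exclusion of the degenerate level $c=0$ really comes from (ii) (applied on any $M_{l'}$ with $l'>0$) rather than from (i).
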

Indeed, note that due to the non-compactness assumption on $N$, given any scalar function $f$ on $\mathcal N$ with compact support, there exists a large enough $l$ such that $\supp f \subset \Phi(\R\times M)$ with $M=\{\rho\leq l\}$. The corollary follows since $M$ satisfies Hypothesis~\ref{hypo1} with $\rho|_M$.

Theorem~\ref{t1} has applications to the recovery of zeroth order time-dependent coefficients for the wave equation from boundary data. More specifically, consider the following initial boundary value problem on $(\M,\g)$:
\begin{align}
\label{eq1}
\left\{ \begin{array}{ll} \Box_{\g} u + q \,u  =  0, & \mbox{on}\ \M,\\  
u  =  h, & \mbox{on}\ \p\M,\\
u = 0, & \mbox{on}\ \Phi((-\infty,0)\times M).\\
\end{array} \right.
\end{align}
where $\Box_{\g}$ denotes the wave operator on $(\M,\g)$ given in local coordinates by the expression 
$$ \Box_{\g} u = -\sum_{i,j=0}^n|\det\g\,|^{-\frac{1}{2}}\frac{\p}{\p x^i}\left(|\det\g\,|^{\frac{1}{2}}\g^{ij}\frac{\p}{\p x^j}u\right)$$
and $q$ is a smooth a priori unknown function with compact support in the set $\Phi((0,\infty)\times M)$. We consider the problem of recovering $q$ from the Dirichlet to Neumann operator $\Lambda_q$ that is defined for all $h$ compactly supported in $\p \M$ through 
\[\Lambda_{q}:h\mapsto \partial_{\bar{\nu}} u|_{\partial \M}.\]
It can be shown that the question of unique recovery of $q$ from $\Lambda_q$ reduces to the question of injectivity of the light ray transform on $(\M,\g)$ (see for example \cite{SY}). As an immediate corollary of Theorem~\ref{t1}, we deduce that $\Lambda_q$ determines $q$ uniquely, if Hypothesis~\ref{hypo0} or Hypothesis~\ref{hypo1} holds.

\subsection{The case of static geometries}
\label{staticcase}
Given a static globally hyperbolic Lorentzian manifold there exists an embedding $\Phi:\R\times N \to \mathcal N$ such that \eqref{stationary} holds with $\eta \equiv 0$. Analogously to the previous section, we define $\M= \Phi(\R \times M)$ where $M \subset N$ is a compact manifold of dimension $n$ with smooth boundary and study the injectivity of the light ray transform on tensors of arbitrary rank $m$ over $\M$. 

Before presenting the main result, we need to recall the definition of the geodesic ray transform on tensors in $(M,g_c)$. To this end, suppose that $\gamma$ is a unit speed geodesic in $(M,g_c)$. We define the bundle
$$ \p_{\text{in}}SM = \{ (x,v)\in TM\,|\, x \in \p M,\, v \in T_xM,\, |v|_{g_c}=1,\, g_c(v,\nu)<0\},$$
where $\nu$ denotes the unit outward pointing normal vector to $\p M$ at the point $x$. For each $(x,v) \in \p_{\text{in}}SM$, we consider the unique geodesic $\gamma$ with initial data $(x,v)$ and define
$$ \tau_+(x,v)= \inf \{r>0\,|\, \gamma(r;x,v) \in \p M, \, \dot{\gamma}(r;x,v) \notin T_{\gamma(r;x,v)}\p M\}.$$
We assume that the manifold $(M,g_c)$ is non-trapping, that is, for all unit speed geodesics $\gamma(\cdot;x,v)$ with $(x,v)\in \p_{\text{in}}SM$, there holds $\tau_+(x,v)<\infty$. Finally, let $S^m=S^m(M)$ denote the bundle of symmetric tensors of rank $m$ on $M$ (not to be confused with $\mathcal S^m$, the corresponding bundle on $\mathcal N$) and define the geodesic ray transform of $\omega \in C^{\infty}_c(M;S^m)$ along $\gamma$ in $M$ as follows:
\begin{align}
\label{geodesicraydef}
\mathcal I \omega(x,v):= \int_0^{\tau_+(x,v)} \omega(\gamma(\tau;x,v),\dot{\gamma}(\tau;x,v))\,d\tau.
\end{align}
Here, analogously to the Lorentzian case, we have in local coordinates
$$
\omega(y,w) = \omega_{j_1 \dots j_m}(y) w^{j_1} \dots w^{j_m}, \quad \forall\, (y,w) \in TM.
$$
We require the following hypothesis to hold:
\begin{hypothesis}
\label{hypo2}
The geodesic ray transform on $(M,g_c)$ is solenoidally injective. In other words, for any $\omega \in \mathcal C^{\infty}_c(M;S^m)$, there holds:
\[ \mathcal I \omega (x,v)=0\quad  \forall \, (x,v) \in \p_{\text{in}}SM \implies \exists\, \theta\quad \text{such that}\quad \omega=d^s\theta,\quad \theta|_{\p M}=0,\]
where $d^s$ denotes the symmetrized covariant derivative on $(M,g_c)$. 
\end{hypothesis}

The study of the solenoidal injectivity of the geodesic ray transform on tensors of arbitrary rank has a rich literature. For example, Hypothesis~\ref{hypo2} with a fixed $m=0,1$ is known to be true when $(M,g_c)$ is a simple manifold \cite{Mu1,Mu2,AR} or has strictly convex boundary and admits a foliation by strictly convex hypersurfaces \cite{UV}. Under the latter condition it was later proved that Hypothesis~\ref{hypo2} holds for all $m=0,1,2$ \cite{SUV}, and subsequently that it holds for all $m=0,1,2,\ldots$ \cite{dHUZ}. For more related results we refer the reader to \cite{CM,PSU1,PSU2,PS,SU} and the review article \cite{IM}. We can now state our main theorem for the injectivity of the light ray transform on tensors.

\begin{theorem}
\label{t2}
Let $(\mathcal N,\g)$ be a static globally hyperbolic Lorentzian manifold of dimension $1+n$. Let $\Phi$ be an embedding satisfying \eqref{stationary} with $\eta=0$ and let $\M=\Phi(\R\times M)$ where $M$ is a compact $n$ dimensional submanifold of $N$ with smooth boundary such that Hypothesis~\ref{hypo2} holds. Let $\alpha \in C^{\infty}_c(\M;\mathcal S^{m})$. The following injectivity result holds for the light ray transform on $(\M,\g)$:
$$ \mathcal L_{\beta}\,\alpha =0 \quad \text{for all maximal $\beta$ in $\M$} \implies \exists \,T,U \quad \text{s.t}\quad \alpha \equiv \bar{d}^sT+U\,\bar{g},$$
where $\bar{d}^s$ denotes the symmetrized covariant derivative\footnote{See the expression \eqref{sym_der_def} in Section~\ref{conformal invariance} for the definition.}, $T \in C^{\infty}_c(\M;\mathcal S^{m-1})$, $U \in C^{\infty}_c(\M;\mathcal S^{m-2})$ and $U\,\bar{g}$ denotes the symmetrized tensor product of the tensors $U$ and $\g$.
\end{theorem}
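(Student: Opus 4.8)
The plan is to reduce to an ultrastatic model and then strip off the gauge one time-frequency at a time. First I would pass to the conformal metric $\g_c$: since $\eta\equiv 0$, \eqref{conf} reads $\Phi^*\g_c=-dt^2+g_c$, a product metric on $\R\times M$. Null pregeodesics are conformally invariant, and by the transformation formulas recorded in Section~\ref{conformal invariance} (which also convert the gauge $\bar{d}^sT+U\g$ into the gauge adapted to $\g_c$), it is enough to treat the case $\g=-dt^2+g_c$; from now on I assume this, with $\bar{d}^s$ the symmetrized covariant derivative of $\g$. The maximal null geodesics are then exactly the curves $s\mapsto(b+s,\gamma(s))$ with $b\in\R$ and $\gamma$ a unit speed geodesic of $(M,g_c)$ (both time orientations occurring as $\gamma$ ranges over geodesics and their reversals), and each leaves any fixed compact set in finite $s$-time because $(M,g_c)$ is non-trapping. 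The inclusion $\{\bar{d}^sT+U\g\}\subset\ker\mathcal L$ is immediate: $(U\g)(\beta(s),\dot\beta(s))$ carries the factor $\g(\dot\beta(s),\dot\beta(s))=0$, while $(\bar{d}^sT)(\beta(s),\dot\beta(s))=\frac{d}{ds}\big(T(\beta(s),\dot\beta(s))\big)$ integrates to $0$ by compact support. The work is the converse.

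Next I would Fourier transform $\alpha$ in $t$, writing $\hat\alpha(\lambda,x,w)$; compact support in $t$ makes $\lambda\mapsto\hat\alpha(\lambda,x,w)$ entire with Paley--Wiener bounds. Substituting $t=b+s$ and Fourier transforming in $b$ shows that $\mathcal L_\beta\alpha=0$ for all $\beta$ is equivalent to $\int_\R e^{i\lambda s}\,h_\lambda(\gamma(s),\dot\gamma(s))\,ds=0$ for all $\lambda\in\R$ and all unit speed geodesics $\gamma$ of $(M,g_c)$, where $h_\lambda(x,v):=\hat\alpha((\lambda,x),(1,v))$ is a polynomial of degree $\le m$ in $v$ whose coefficients are smooth, compactly supported in the interior of $M$, and holomorphic in $\lambda$. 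For fixed $\gamma$ the left side is entire in $\lambda$ and vanishes on $\R$, hence vanishes identically; writing $h_\lambda=\sum_{N\ge0}\lambda^Nh_N$ (so $h_N$ is $\frac{(-i)^N}{N!}$ times the polynomial obtained by evaluating the $N$-th time-moment of $\alpha$ on the vector $(1,v)$) and expanding $e^{i\lambda s}$, the vanishing of all Taylor coefficients in $\lambda$ reads
\[
\sum_{l+j=N}\frac{i^l}{l!}\int_\R s^l\,h_j(\gamma(s),\dot\gamma(s))\,ds=0\qquad(N=0,1,2,\dots)
\]
for all maximal geodesics $\gamma$ and their reversals.

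The heart of the argument is an induction on $N$ producing polynomials $g_N(x,v)$ of degree $\le m-1$ in $v$, with coefficients compactly supported in the interior of $M$, and polynomials $\rho_N(x,v)$ of degree $\le m-2$, such that
\[
h_N=Xg_N+i\,g_{N-1}+(|v|_{g_c}^2-1)\,\rho_N\qquad(N\ge 0,\ g_{-1}:=0),
\]
where $X$ is the generator of the geodesic flow of $(M,g_c)$, i.e.\ $Xf(x,v)=\frac{d}{ds}\big|_{0}f(\gamma_{x,v}(s),\dot\gamma_{x,v}(s))$, extended to act on polynomials in the velocity. Granting this for indices below $N$: in the $N$-th moment condition every term with $l\ge1$ has $j=N-l\le N-1$, so on a unit speed geodesic $h_j(\gamma,\dot\gamma)=\frac{d}{ds}g_j(\gamma,\dot\gamma)+i\,g_{j-1}(\gamma,\dot\gamma)$ there (the $(|v|_{g_c}^2-1)$ term drops out since $|\dot\gamma|_{g_c}=1$); integrating by parts in $s$ (the boundary terms vanish because the $g_j$ are compactly supported) the sum telescopes and the condition collapses to $\int_\R(h_N-i\,g_{N-1})(\gamma(s),\dot\gamma(s))\,ds=0$ for all maximal geodesics, i.e.\ the geodesic ray transform over $(M,g_c)$ of the polynomial $h_N-i\,g_{N-1}$ vanishes. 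Restricting that polynomial to unit vectors and homogenizing its lower-degree parts with $g_c$ expresses it as $A(x,v)+B(x,v)$, homogeneous of degrees $m$ and $m-1$; since one of $A,B$ is even and the other odd under $v\mapsto -v$, comparing the ray transform along a geodesic with that along its reversal forces both geodesic ray transforms (in ranks $m$ and $m-1$) to vanish, whence Hypothesis~\ref{hypo2} yields symmetric tensors $\theta_m,\theta_{m-1}$ — which may be taken compactly supported in the interior of $M$, using the support properties of the ray transform in this setting — with $d^s\theta_m=A$ and $d^s\theta_{m-1}=B$. Using $(d^s\theta)(x,v)=X(\theta(\cdot,\cdot))(x,v)$ along geodesics and setting $g_N:=\theta_m+\theta_{m-1}$ (as polynomials in $v$), the polynomial $h_N-i\,g_{N-1}-Xg_N$ vanishes on unit vectors, hence equals $(|v|_{g_c}^2-1)\rho_N$ with $\deg_v\rho_N\le m-2$, closing the induction.

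Finally I would reassemble: summing the $N$-th identity times $\lambda^N$ gives $h_\lambda=(X+i\lambda)g_\lambda+(|v|_{g_c}^2-1)\rho_\lambda$ with $g_\lambda=\sum_N\lambda^Ng_N$, $\rho_\lambda=\sum_N\lambda^N\rho_N$, which under the correspondence between polynomials of degree $\le p$ in $v$ and symmetric $p$-tensors on $\M$ (evaluation on $(1,v)$, under which $\bar{d}^s$ becomes, at frequency $\lambda$, the operator $X+i\lambda$, and $|v|_{g_c}^2-1$ becomes $\g((1,v),(1,v))$) and after inverting the Fourier transform in $t$ yields $\alpha=\bar{d}^sT+U\g$; undoing the conformal scaling of the first step then gives the assertion for the original metric. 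I expect the main obstacle to be precisely this last step: the layer stripping of the previous paragraph is clean, but turning the formal series $g_\lambda,\rho_\lambda$ into genuine elements of $C^\infty_c(\M;\mathcal S^{m-1})$ and $C^\infty_c(\M;\mathcal S^{m-2})$ requires quantitative control $\|g_N\|\lesssim C^N/N!$ in suitable norms, hence a stability estimate and a boundary-support statement for the geodesic ray transform on $(M,g_c)$; these are available in the situations where Hypothesis~\ref{hypo2} is known (e.g.\ under a convex foliation) but must be carried through with care, together with the bookkeeping that propagates the Paley--Wiener bounds of $h_\lambda$ to $g_\lambda,\rho_\lambda$. I would also flag that Hypothesis~\ref{hypo2} is used simultaneously in ranks $m$ and $m-1$, in line with the cited results where solenoidal injectivity holds in all ranks at once.
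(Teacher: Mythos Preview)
Your overall architecture---conformal reduction to $-dt^2+g_c$, Fourier transform in $t$, Taylor expansion at $\tau=0$, an inductive layer-stripping using the parity trick to separate ranks $m$ and $m-1$---matches the paper's. The difference, and the gap you yourself flag, is in how the potentials are produced.

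You build $g_N$ at each step by invoking Hypothesis~\ref{hypo2}: given that $\mathcal I(h_N-i\,g_{N-1})=0$, you pick \emph{some} $\theta_m,\theta_{m-1}$ with $d^s\theta=\cdot$ and set $g_N=\theta_m+\theta_{m-1}$. There is no canonical choice here, and nothing in Hypothesis~\ref{hypo2} controls the size of $\theta$ in terms of the data; so the series $\sum_N\lambda^Ng_N$ has no reason to converge, let alone to define the Fourier transform of a compactly supported tensor. You are right that a stability estimate would fix this in the specific geometries where such estimates are known, but then the theorem would no longer follow from Hypothesis~\ref{hypo2} as stated. (A smaller point: ``may be taken compactly supported in the interior of $M$'' is not what Hypothesis~\ref{hypo2} gives; it only gives $\theta|_{\partial M}=0$.)

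The paper sidesteps this entirely by \emph{fixing the gauge before the induction}. It writes $\alpha=f\,dt+\omega$ with $f$ solenoidal, and then applies the trace-free Helmholtz decomposition $\omega=\omega^{\tfs}+\i\omega^\t+d^sa_1$ and the Helmholtz decomposition $a_1=a_1^s+d^sh$, all on $M$. These potentials $a_1,h$ are obtained by solving elliptic boundary value problems, so they are automatically in $C_c^\infty(\R;C^\infty(M;S^\bullet))$; a further potential $a_0(t)=\int_{-\infty}^t\omega^\t$ is shown to be compactly supported once the $k=0$ step gives $\widehat{\omega^\t}(0)=0$. The induction then proves relations among the Taylor coefficients at $\tau=0$ of \emph{already existing} analytic functions $\widehat f,\widehat{\omega^{\tfs}},\widehat a_0,\widehat{a_1^s},\widehat h$, namely $\partial_\tau^k\widehat f=\iota k\,\partial_\tau^{k-1}\widehat{a_1^s}$, $\partial_\tau^k\widehat{\omega^{\tfs}}=0$, $\partial_\tau^{k-1}\widehat a_0=-\iota(k-1)\partial_\tau^{k-2}\widehat h$. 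Analyticity (Paley--Wiener) then gives $f=\partial_ta_1^s$, $\omega^{\tfs}=0$, $a_0=-\partial_th$ globally, and one reads off $T=a_0\,dt+a_1$, $U=\partial_ta_0$. No convergence question arises and no stability estimate is needed. The missing idea in your argument is precisely this: choose the potential once and for all via Helmholtz, and let the induction verify identities between known analytic objects rather than manufacture a new potential at each order.
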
  
\bigskip
Let us emphasize that the gauge appearing in the statement of Theorem~\ref{t2} is the natural one since the light ray transform of any tensor of the form $\bar{d}^sT+U\,\bar{g}$ with $T, U$ compactly supported in $\M$, vanishes. We refer the reader to Lemma~\ref{gauge} for the details. Observe also that, akin to Corollary~\ref{cor1}, the result of Theorem~\ref{t2} can be formulated for compactly supported tensor fields on a suitable non-compact Lorentzian manifold. Finally we mention that Theorem~\ref{t2} extends analogous results obtained in \cite[Proposition 1.4]{FIKO}, where only the cases $m=0,1$ were considered. 

\subsection{Previous literature}
The study of injectivity of the light ray transform on tensors of arbitrary rank is motivated in part due to its connection with coefficient determination problems for the wave equation on Lorentzian manifolds from boundary data, as shown for example in \cite{AR,BA,FIKO,KO,Stefanov1,SY,Wa} for the cases $m=0,1$. In the setting of Minkowski spacetime, invertibility of the light ray transform on scalar functions was proved by Stefanov in \cite{Stefanov1}. This was later extended to derive a local inversion result \cite{R}. In \cite{W}, the light ray transform on two tensors was also considered in Minkowski space-time that arises in the study of cosmic strings. There it was showed that the light ray transform recovers space-like singularities and some light like singularities of the two tensor. 

Beyond the Minkowski space time the literature is sparse even in the scalar case $m=0$. Stefanov proved the injectivity of the light ray transform for this case under the geometrical assumptions that the Lorentzian manifold is real analytic and that a convexity type assumption holds \cite{Stefanov2}. In \cite{FIKO}, injectivity of the light ray transform was proved for the cases $m=0,1$ when $(\M,\g)$ is static and the transversal manifold has an injective geodesic ray transform \cite{FIKO}. This result has been generalized to the case of non-smooth scalar functions \cite{FK} and continuous one-forms. Finally, we refer the reader to \cite{LOSU} for the study of the light ray transform on general Lorentzian manifolds. There, it is proven that the space like singularities of a scalar function can be recovered from this map. 

As discussed in Section~\ref{stationarycase}, an immediate corollary of Theorem~\ref{t1} is the unique recovery of a zeroth order coefficient from the Dirichlet to Neumann map for the wave equation in stationary geometries, provided that Hypothesis~\ref{hypo0} or Hypothesis~\ref{hypo1} holds. As far as we know and specifically in the case of Hypothesis~\ref{hypo1}, this is the first instance of a coefficient recovery result for linear hyperbolic partial differential equations in geometries that are not real analytic and do not admit a product structure $\g=-dt^2+g(x)$. In fact the same phenomenon appears in the context of the anisotropic Calder\'{o}n problem (see for example \cite{DKLS}), where the analogous product structure is assumed in the Riemannian context for all known results. Theorem~\ref{t2} provides the generalization of \cite{FIKO} to the more general case of tensors of arbitrary rank $m \geq 2$ in static geometries. We mention that in the case $m=2$, this theorem has applications in transmission ultrasound imaging of moving tissues and organs \cite[Section 5]{LOSU}. It is also related to analysis of the cosmic microwave background radiation \cite{LOSU}.

The analysis in this paper is based on reducing the question of injectivity of the light ray transform in the Lorentzian manifold $(\M,\g)$ to the question of injectivity of a ray transform on the spatial part of the manifold $(M,g_c)$. In the stationary case, the corresponding ray transform is a generalization of the geodesic ray transform, consisting of integrals over a family of curves that solve equations of the type \eqref{generalcurve}. Injectivity of such a ray transform has in fact been studied for a broader family of vector fields $G$ than the specific one given by expression \eqref{G_def}. We refer the reader to \cite{FSU}, and also the appendix section of \cite{UV} written by Hanming Zhou (see also \cite{Z}). In the static case, the corresponding ray transform is the geodesic ray transform on $M$. As discussed in Section~\ref{staticcase}, solenoidal injectivity on tensors of arbitrary rank is known to hold under the assumption of a convex boundary and a global foliation of $M$ by convex hypersurfaces \cite{dHUZ}. 
\subsection{Outline of the paper}
In Section~\ref{prelim}, we begin with the derivation of \eqref{stationary}. We then discuss the natural gauge for the injectivity of the light ray transform and the conformal invariance of this gauge. Section~\ref{thm1section} is concerned with the proof of Theorem~\ref{t1}. Finally, Section~\ref{thm2section} contains the proof of Theorem~\ref{t2}. The latter two sections are independent of each other.

\section{Preliminaries}
\label{prelim}
\subsection{Geometry of stationary Lorentzian manifolds}
\label{stationary_structure}
The aim of this subsection is to construct the canonical embedding $\Phi:\R \times N \to \mathcal N$ corresponding to a Cauchy surface $N$ in $\mathcal N$, such that the metric $\Phi^*\g$ takes the form \eqref{stationary}. Let us denote by $\mathcal E$ the complete Killing vector field on $\mathcal N$, and for each $x \in N$, define $\Phi(\cdot,x)$ as the integral curve 
\begin{align}\label{int_curve} \frac{d}{dt}{\Phi}(t,x)=\mathcal E(\Phi(t,x)),\quad \forall\, t \in \R \quad \text{and}\quad \Phi(0,x)=x.\end{align} 
Existence of a solution $\Phi(t,x)$ for all $t\in \R$ is guaranteed by the completeness of the vector field $\mathcal E$. We will show that $\Phi$ is a diffeomorphism. By global hyperbolicity, any integral curve $\Phi(\cdot,x)$ can not self intersect. As two distinct integral curves can not intersect either, we deduce that $\Phi$ is injective. To see surjectivity, let $y \in \mathcal N$ and consider the integral curve 
$$ \frac{d}{dt}\Psi(t)=\mathcal E(\Psi(t))  \quad \forall\, t \in \R \quad \text{and}\quad \Psi(0)=y.$$
Using the definition of a Cauchy hypersurface and the fact that $\mathcal E$ is time-like, it follows that $\Psi(s) \in N$ for some $s \in \R$. Hence $y=\Phi(-s, \Psi(s))$ and $\Phi$ is surjective. Finally, since $\mathcal E$ is smooth, it follows that $\Phi$ is a diffeomorphism.

Next, we study $\Phi^*\g$. Let $(t,x)$ denote a local coordinate system near a point $p \in \R \times N$ and let $\g_{ij}$ represent the components of the metric in this coordinate system, with $i,j=0,1,\ldots,n$. Since $\mathcal E$ is a Killing vector field, it follows that the components $(\Phi^*\g)_{ij}(t,x)$ are all independent of $t$.
Therefore, we can write: 
$$\Phi^*\g(t,x)= (\Phi^*\g)_{00}(x)\,dt^2 + 2\underbrace{(\Phi^*\g)_{0\alpha}(x)\,dx^\alpha}_{\eta}\,dt+ g(x)$$
where $g=\g|_N$ and the index $\alpha$ runs from $1$ to $n$. Note that $(N,g)$ is a Riemannian manifold since $N$ is a space-like hypersurface, in the sense that all of its tangent vectors are space-like. Since $\label{killing_exp}\partial_t=\Phi^*\mathcal E$, it is easy to see that $\eta=\p_t^\flat=\Phi^*\mathcal E^\flat|_N$, where $\mathcal E^\flat$ is the dual covector associated with $\mathcal E$. Finally, we define
\[ \kappa= |\eta|_{g}^2- (\Phi^*\g)_{00}\]
and observe that $\kappa=|\eta|_g^2-\g(\mathcal E,\mathcal E)>0$.

\subsection{Conformal invariance of the gauge}
\label{conformal invariance}

We begin with a lemma that shows the gauge in Theorem~\ref{t2} is the natural one for the injectivity of the light ray transform on tensors.
\begin{lemma}
\label{gauge}
Let $(\mathcal N,\bar{g})$ denote a globally hyperbolic Lorentzian manifold. Suppose that $T \in C^{\infty}_c(\mathcal N;\mathcal S^{m-1})$ and $U \in C^{\infty}_c(\mathcal N;\mathcal S^{m-2})$. Then:
$$ \mathcal L_\beta (\bar{d}^s T+U\,\bar{g}) =0\quad \text{for all maximal null geodesics $\beta \subset \mathcal N$}.$$
In other words, given $T, U$ as above, $\mathcal L_\beta$ is invariant under the transformation
\begin{align}\label{ng}
\alpha \to \alpha + \bar{d}^sT+U\,\bar{g}.
\end{align}
\end{lemma}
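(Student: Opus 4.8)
The plan is to verify the two statements — $\mathcal L_\beta(\bar d^s T) = 0$ and $\mathcal L_\beta(U\,\bar g) = 0$ — separately, since by linearity of $\mathcal L_\beta$ it suffices to treat each gauge term on its own.

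For the term $U\,\bar g$: let $\beta$ be a maximal null geodesic, parametrized affinely as in \eqref{affine}. The key observation is that the symmetrized tensor product $U\,\bar g$, when evaluated on the pair $(\beta(s),\dot\beta(s))$, produces a sum of terms each of which contains a factor $\bar g(\dot\beta(s),\dot\beta(s))$, because contracting a symmetrized product $U\,\bar g$ against $m$ copies of the same vector $w$ yields (up to a combinatorial constant depending only on $m$) the product $U(w,\dots,w)\,\bar g(w,w)$. Since $\beta$ is null, $\bar g(\dot\beta(s),\dot\beta(s)) = 0$ for every $s$, so the integrand in \eqref{lightraydef} vanishes identically and hence $\mathcal L_\beta(U\,\bar g) = 0$. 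I would write out the combinatorial identity for the symmetrization once, carefully, since the precise constant is irrelevant but the structural claim (every term carries the factor $\bar g(w,w)$) is what matters.

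For the term $\bar d^s T$: the strategy is to recognize the integrand as a total derivative along $\beta$. Contracting the symmetrized covariant derivative $\bar d^s T$ against $m$ copies of $\dot\beta(s)$ and using that $\beta$ is a geodesic ($\nabla^{\bar g}_{\dot\beta}\dot\beta = 0$), the symmetrization becomes trivial on the diagonal and one obtains
$$
(\bar d^s T)(\beta(s),\dot\beta(s)) = \frac{d}{ds}\Big( T(\beta(s),\dot\beta(s)) \Big),
$$
where $T(\beta(s),\dot\beta(s)) = T_{j_1\dots j_{m-1}}(\beta(s))\,\dot\beta^{j_1}\dots\dot\beta^{j_{m-1}}$; this is the standard "fundamental theorem of calculus" computation underlying all ray-transform gauge invariances, and the geodesic equation is exactly what kills the extra Christoffel terms coming from differentiating the $\dot\beta^{j_i}$ factors. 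Integrating over $s \in \R$ and using that $T$ is compactly supported — so that, by global hyperbolicity and \cite[Lemma 13, p. 408]{On}, $\beta(s)$ leaves $\supp T$ for $|s|$ large, making the boundary terms vanish — gives $\mathcal L_\beta(\bar d^s T) = \int_\R \frac{d}{ds} T(\beta(s),\dot\beta(s))\,ds = 0$.

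The only mild subtlety — and the step I would be most careful about — is bookkeeping with the definition \eqref{sym_der_def} of the symmetrized covariant derivative referenced in the footnote: one must check that the symmetrization constant and the index placement are consistent so that the contraction against the repeated vector $\dot\beta(s)$ genuinely collapses to the plain derivative $\frac{d}{ds}$ of the scalar $T(\beta(s),\dot\beta(s))$, rather than that quantity times a spurious factor. Once the conventions are pinned down this is routine; there is no analytic obstacle, as compact support handles all convergence and boundary issues. Neither stationarity nor staticity of $(\mathcal N,\bar g)$ is used — the lemma holds for any globally hyperbolic Lorentzian manifold, exactly as stated.
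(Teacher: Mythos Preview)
Your proposal is correct and matches the paper's proof essentially line for line: both split by linearity, dispose of $U\,\bar g$ via $\bar g(\dot\beta,\dot\beta)=0$, and handle $\bar d^s T$ by showing $(\bar d^s T)(\beta,\dot\beta)=\partial_s T(\beta,\dot\beta)$ using the geodesic equation and then invoking compact support.
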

\begin{proof}
Since $\bar{g}(\dot{\beta}(s),\dot{\beta}(s))=0$ along any null geodeisc, it follows trivially that $\mathcal L_\beta (U\,\bar{g})=0$. Now, let us recall the definition of the symmetrized covariant derivative
\begin{multline}
\label{sym_der_def}
[\bar{d}^s T]_{i_1,\ldots,i_m}=\frac{1}{m!}\sum_{\pi \in S(m)}  (\p_{i_{\pi(1)}}T_{i_{\pi(2)},\ldots,i_{\pi(m)}}-\bar{\Gamma}^l_{i_{\pi(2)},i_{\pi(1)}}T_{l,i_{\pi(3)},\ldots,i_{\pi(m)}} \\
-\ldots-\bar{\Gamma}^l_{i_{\pi(m)},i_{\pi(1)}}T_{i_{\pi(2)},\ldots,l}) 
\end{multline}
where $\bar{\Gamma}^i_{jk}$ denotes the Christoffel symbols, $S(m)$ denotes the group of permutations of the set $\{1,\ldots,m\}$ and we are using the Einstein's summation convention with respect to the index $l$. From this identity, together with the geodesic equation
$$ \ddot{\beta}^i(s) + \bar{\Gamma}^i_{jk}(\beta(s)) \dot{\beta}^j(s)\dot{\beta}^k(s)=0$$
it follows that
$$ \p_s T(\beta(s),\dot{\beta}(s))= [\bar{d}^s T]_{i_1\ldots i_m} \dot{\beta}^{i_1}(s)\ldots \dot{\beta}^{i_m}(s)$$
and subsequently we have $\mathcal L_\beta (\bar{d}^sT)=0$ since $T$ is compactly supported.
\end{proof}
Next, we aim to study the light ray transform on tensors under conformal rescalings of the metric and show that the natural gauge for the problem is conformally invariant. We consider a globally hyperbolic Lorentzian manifold $(\mathcal N,\g)$ and use the notation $\mathcal L^{\g}_\beta \alpha$ to emphasize the dependence of the light ray transform on the metric. Let $c>0$ and define $\tilde{g}=c\g$. Using \cite[Section 6, Lemma 6.1]{LOSU1}, we observe that given a maximal null geodesic $\beta:\R\to \mathcal N$ satisfying \eqref{affine} with respect to $\g$ and any non-zero $s_0\in \R$, the same curve $\beta$ parametrized as $\tilde{\beta}(s)=\beta(\sigma(s))$ satisfies \eqref{affine} with respect to $\tilde{g}$, where
$$ \sigma(s)=\,\int_{s_0}^s c(\beta(\tau))^{-1}\,d\tau,$$
with $s \in \R$. This shows that given a $\alpha \in C^{\infty}_c(\mathcal N;\mathcal S^m)$, there holds:
\begin{align}
\label{affine_conf}
 \mathcal L^{\tilde{g}}_{\tilde{\beta}}\tilde{\alpha}=\int_{\R} \tilde{\alpha}(\tilde{\beta}(s),\dot{\tilde{\beta}}(s))\,ds=\int_\R (c^{-m+1}\tilde{\alpha})(\beta(s),\dot\beta(s))\,ds.
\end{align}

Using the above identity, it is clear that the injectivity of the light ray transform on scalar functions is conformally invariant. For tensors of rank $m\geq1$ we have the following lemma that shows the natural gauge for the problem as seen in Theorem~\ref{t2} is conformally invariant as well.

\begin{lemma}
\label{conf_inv_lem}
Let $(\mathcal N,\g)$ be a Lorentzian manifold and consider $\tilde{g}=c\g$ for some smooth positive function $c$. Suppose $T \in C^{\infty}(\mathcal N;\mathcal S^{m-1})$ for some $m=1,\ldots$. There exists $U \in C^{\infty}(\mathcal N;\mathcal S^{m-2})$, such that:
$$ c^{-m+1}\tilde{d}^s T=\bar{d}^s(c^{-m+1}T)+U\,\g.$$  
In the case $m=1$, the tensor $U$ is identically zero.
\end{lemma}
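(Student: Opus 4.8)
The plan is to compare the two symmetrized covariant derivatives $\tilde d^s$ and $\bar d^s$ directly through their Christoffel symbols. Recall that for a conformal change $\tilde g = c\bar g$ the Christoffel symbols differ by a first-order term: writing $c = e^{2\varphi}$ (or just keeping $c$), one has
$$\tilde\Gamma^{\,l}_{jk} = \bar\Gamma^{\,l}_{jk} + \tfrac{1}{2c}\bigl(\delta^l_j\,\p_k c + \delta^l_k\,\p_j c - \bar g_{jk}\,\bar g^{lp}\p_p c\bigr).$$
Plugging this into the definition \eqref{sym_der_def} of the symmetrized covariant derivative applied to the $(m-1)$-tensor $T$, and also expanding $\bar d^s(c^{-m+1}T)$ via the Leibniz rule (the $\p_{i_{\pi(1)}}(c^{-m+1})$ terms will have to be accounted for), the difference $c^{-m+1}\tilde d^s T - \bar d^s(c^{-m+1}T)$ reduces to an explicit algebraic expression that is linear in $\p c$ and in $T$. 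The first step, then, is to carry out this expansion and collect terms.

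Second, I would organize the resulting expression by the structure of its index contractions. There are two kinds of terms produced: those coming from the $\delta^l_j\p_k c$ and $\delta^l_k\p_j c$ pieces of $\tilde\Gamma - \bar\Gamma$ (which, after symmetrization over all $m$ indices, combine with the derivative-of-$c^{-m+1}$ terms from the Leibniz rule), and those coming from the $-\bar g_{jk}\bar g^{lp}\p_p c$ piece, which manifestly carry a factor of $\bar g_{i_a i_b}$ for some pair of indices. The claim is that the first kind cancels entirely — this is exactly the mechanism by which the conformal weight $c^{-m+1}$ is chosen — while the second kind is, after symmetrization, of the form $U\,\bar g$ for an explicit $U \in C^\infty(\mathcal N;\mathcal S^{m-2})$ built from $T$ and one derivative of $c$. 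One should also check that the Christoffel corrections hidden inside the $\bar\Gamma^l$ terms of \eqref{sym_der_def} acting on the extra $c^{-m+1}$ are consistent; since those are already accounted for by writing $\bar d^s$ with respect to $\bar g$, no new terms arise there.

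The main obstacle will be the bookkeeping of the symmetrization over $S(m)$: one must verify that after averaging over all permutations the "$\delta$-type" terms really do cancel against the Leibniz terms rather than merely combining, and this requires tracking which index plays the role of the differentiated slot. A clean way to do this is to first establish the pointwise identity at the level of the fully symmetric polynomial $T(y,w) = T_{i_1\ldots i_{m-1}}(y)w^{i_1}\cdots w^{i_{m-1}}$: one shows that for any vector $w$,
$$[\tilde d^s T](y,w) = [\bar d^s T](y,w) + \tfrac{m-1}{2c}\,(\p_w c)\,T(y,w) - \tfrac{1}{2c}\,|w|^2_{\bar g}\,\langle \nabla^{\bar g} c, \text{(contraction of }T)\rangle,$$
where $\p_w c = w^k\p_k c$, since the symmetrization is automatic in the polynomial formalism and the combinatorial factor $m-1$ (the number of slots carrying $w$'s from $T$) emerges transparently. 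Multiplying through by $c^{-m+1}$, the first correction term is precisely $-\bar d^s(c^{-m+1})\cdot T$'s contribution reorganized so that $c^{-m+1}\tilde d^sT$ becomes $\bar d^s(c^{-m+1}T)$ plus the $|w|^2_{\bar g}$-term, and reading off the coefficient of $|w|^2_{\bar g}=\bar g(w,w)$ gives $U$. Finally, when $m=1$ the tensor $T$ is a function, $\p_w c\cdot T$ is the only correction and it assembles into $\bar d^s(c^0 T)=\bar d^s T = dT$ exactly with no leftover, and there is no room for a $|w|^2_{\bar g}$ term since $U$ would be a $(-1)$-tensor; hence $U\equiv 0$, as claimed.
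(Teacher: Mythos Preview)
Your approach is essentially the same as the paper's: both compare $\tilde d^s$ and $\bar d^s$ directly via the conformal-change formula for Christoffel symbols, then check that the Kronecker-$\delta$ pieces cancel against the Leibniz terms coming from the weight $c^{-m+1}$, leaving only the $\bar g_{jk}$-piece, which is $U\bar g$. The paper carries this out with the substitution $\tilde T = c^{1-m}T$ and full index bookkeeping over $S(m)$, while you streamline with the polynomial formalism---but the argument is the same. Two small corrections to your displayed identity: the coefficient of the $(\p_w c)\,T(y,w)$ term should be $-\tfrac{m-1}{c}$ rather than $+\tfrac{m-1}{2c}$ (the two $\delta$-terms in $\tilde\Gamma-\bar\Gamma$ add, not average), and the $|w|^2_{\bar g}$ term also carries the factor $m-1$; with these fixed the cancellation against $\p_w(c^{-m+1}) = -(m-1)c^{-m}\p_w c$ is exact, and for $m=1$ both corrections simply vanish since $m-1=0$.
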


\begin{proof}
We use the notations $\tilde{\Gamma}^k_{ij}$ (resp., $\tilde{d}^s$) and $\bar\Gamma^k_{ij}$ (resp., $\bar{d}^s$) to denote the Christoffel symbols (resp., symmetrized covariant derivative) on $\mathcal N$ with respect to the metrics $\tilde{g}$ and $\g$ respectively. By definition,
\begin{multline}
\label{symder}
[\tilde{d}^s \tilde{T}]_{i_1,\ldots,i_m}=\frac{1}{m!}\sum_{\pi \in S(m)}  (\p_{i_{\pi(1)}}\tilde{T}_{i_{\pi(2)},\ldots,i_{\pi(m)}}-\tilde{\Gamma}^l_{i_{\pi(2)},i_{\pi(1)}}\tilde{T}_{l,i_{\pi(3)},\ldots,i_{\pi(m)}} \\
-\ldots-\tilde{\Gamma}^l_{i_{\pi(m)},i_{\pi(1)}}\tilde{T}_{i_{\pi(2)},\ldots,l}) 
\end{multline}
Next, we define $\phi=-\frac{1}{2}\log c$ and recall the following identity that relates the Christoffel symbols $\bar{\Gamma}^i_{jk}$ and $\tilde{\Gamma}^i_{jk}$ (see \cite[Lemma 6.3]{LOSU1}):
$$ \bar{\Gamma}^i_{jk}=\tilde{\Gamma}^i_{jk} + \delta^i_j\p_k\phi+\delta^i_k\p_j\phi -b^i \tilde{g}_{jk},$$
where $b=\nabla^{\tilde{g}}\phi$.
Using the above identity together with the expression \eqref{symder} we observe that the symmetrized derivative on tensors $\tilde{T} \in C^{\infty}_c(\mathcal N;\mathcal S^{m-1})$ transforms as
\begin{align}\label{sym_eq_2} [\bar{d}^s\tilde{T}]_{i_1,\ldots,i_m}=[\tilde{d}^s\tilde{T}]_{i_1,\ldots,i_m}- \frac{1}{m!}\underbrace{\left[\sum_{\pi \in S(m)}\mathcal S_{\pi}\right]}_{I}-U\tilde{g}\end{align}
where
\begin{align*}
\mathcal S_\pi=&\left((\p_{i_{\pi(1)}}\phi)\, \tilde{T}_{i_{\pi(2)},\ldots,i_{\pi(m)}}+(\p_{i_{\pi(2)}}\phi) \,\tilde{T}_{i_{\pi(1)},i_{\pi(3)},\ldots,i_{\pi(m)}}\right)+\ldots\\
&+\left((\p_{i_{\pi(1)}}\phi)\, \tilde{T}_{i_{\pi(2)},\ldots,i_{\pi(m)}}+(\p_{i_{\pi(m)}}\phi) \,\tilde{T}_{i_{\pi(2)},\ldots,i_{\pi(m-1)},i_{\pi(1)}}\right).
\end{align*}

We can simplify $I$ further, by considering the number of times that a fixed term of the form $(\p_{i_{\tilde{\pi}(1)}}\phi)\,\tilde{T}_{i_{\tilde{\pi}(2)},\ldots,i_{\tilde{\pi}(m)}}$ appears in $I$ with $\tilde{\pi} \in S(m)$. Indeed, observe that:
\begin{multline*}
\sum_{\pi \in S(m)}(\p_{i_{\pi(1)}}\phi)\, \tilde{T}_{i_{\pi(2)},\ldots,i_{\pi(m)}}= \sum_{\pi \in S(m)}(\p_{i_{\pi(2)}}\phi) \,\tilde{T}_{i_{\pi(1)},i_{\pi(3)},\ldots,i_{\pi(m)}}\\
=\ldots=\sum_{\pi \in S(m)}(\p_{i_{\pi(m)}}\phi) \,\tilde{T}_{i_{\pi(2)},\ldots,i_{\pi(1)}}.
\end{multline*}
Consequently, equation \eqref{sym_eq_2} reduces to 
\begin{align}\label{sym_eq_3}
 [\bar{d}^s\tilde{T}]_{i_1,\ldots,i_m}=[\tilde{d}^s\tilde{T}]_{i_1,\ldots,i_m}- \frac{2(m-1)}{m!}\left[\sum_{\pi \in S(m)}  (\p_{i_{\pi(1)}}\phi)\tilde{T}_{i_{\pi(2)},\ldots,i_{\pi(m)}}  \right]-U\tilde{g}.
\end{align}

Next, we consider the tensor $T$ in the statement of the lemma and define $\tilde{T}=c^{1-m}T$. We use the defining expression for the symmetrized derivative \eqref{symder} and the definition of $\phi$ to obtain
\begin{align*}
[c^{-m+1}\tilde{d}^sT]_{i_1,\ldots,i_m}&=[c^{-m+1}\tilde{d}^s(c^{m-1}\tilde{T})]_{i_1,\ldots,i_m}\\
&=[\tilde{d}^s \tilde{T}]_{i_1,\ldots,i_m}-\frac{2(m-1)}{m!}\left[\sum_{\pi \in S(m)}(\p_{i_{\pi(1)}}\phi)\tilde{T}_{i_{\pi(2)},\ldots,i_{\pi(m)}}\right].
\end{align*}
The claim follows from this identity and equation \eqref{sym_eq_3}.
\end{proof}

Combining Lemma~\ref{conf_inv_lem} together with equation \eqref{affine_conf} and Theorems~\ref{t1}$-$\ref{t2} we have the following immediate corollary.

\begin{corollary}
\label{conf_inv}
Given a globally hyperbolic Lorentzian manifold $(\mathcal N,\g)$, injectivity of the light ray transform modulo the gauge \eqref{ng} is conformally invariant. In particular, the injectivity results stated in Theorem~\ref{t1} and Theorem~\ref{t2} hold in the more general setting that $\mathcal N$ is conformally stationary or conformally static respectively.
\end{corollary}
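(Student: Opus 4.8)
The plan is first to prove the general assertion — that for a globally hyperbolic $(\mathcal N,\g)$ and any smooth $c>0$, injectivity of the light ray transform modulo the gauge \eqref{ng} holds for $\g$ if and only if it holds for $\tilde{g}:=c\g$ — and then to obtain the ``in particular'' clause by applying Theorem~\ref{t1} or Theorem~\ref{t2} to a stationary (resp.\ static) representative of the given conformal class. Since $\g=c^{-1}\tilde{g}$, the two metrics play symmetric roles, so it suffices to show one implication: assuming injectivity modulo \eqref{ng} for $\g$, I will deduce it for $\tilde{g}=c\g$. For the special case one then takes $\g$ to be the stationary (resp.\ static) representative, so that in the notation of the corollary the given metric is $\tilde{g}$; Theorem~\ref{t1} (if the submanifold $M$ satisfies Hypothesis~\ref{hypo0} or Hypothesis~\ref{hypo1} for $\g$) or Theorem~\ref{t2} (if $M$ satisfies Hypothesis~\ref{hypo2} for the metric $g_c$ built from $\g$ via \eqref{conf}) furnishes precisely the assumed injectivity.

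Now let $\tilde{\alpha}\in C^\infty_c(\mathcal N;\mathcal S^m)$ satisfy $\mathcal L^{\tilde{g}}_{\tilde{\beta}}\tilde{\alpha}=0$ for every maximal null geodesic $\tilde{\beta}$ of $\tilde{g}$. Because a conformal rescaling only reparametrizes null geodesics, \eqref{affine_conf} identifies $\mathcal L^{\tilde{g}}_{\tilde{\beta}}\tilde{\alpha}$ with $\mathcal L^{\g}_{\beta}\big(c^{-m+1}\tilde{\alpha}\big)$ for the corresponding $\g$-null geodesic $\beta$, and this correspondence exhausts the maximal null geodesics of both metrics. Hence $\mathcal L^{\g}_\beta\big(c^{-m+1}\tilde{\alpha}\big)=0$ for all maximal null $\beta$ of $\g$, and the assumed injectivity modulo \eqref{ng} yields $T\in C^\infty_c(\mathcal N;\mathcal S^{m-1})$ and $U\in C^\infty_c(\mathcal N;\mathcal S^{m-2})$ with $c^{-m+1}\tilde{\alpha}=\bar{d}^sT+U\,\g$, under the usual conventions that for $m=0$ both terms are absent (so $\tilde{\alpha}\equiv 0$, recovering the scalar case already noted in the text) and for $m=1$ only the $U$-term is absent.

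It remains to rewrite the right-hand side in the gauge adapted to $\tilde{g}$. Multiplying by $c^{m-1}$ and using $c\g=\tilde{g}$ gives $\tilde{\alpha}=c^{m-1}\bar{d}^sT+c^{m-2}U\,\tilde{g}$. Applying Lemma~\ref{conf_inv_lem} not to $T$ but to the rescaled tensor $c^{m-1}T\in C^\infty_c(\mathcal N;\mathcal S^{m-1})$ produces $U'\in C^\infty(\mathcal N;\mathcal S^{m-2})$, supported in $\supp T$ because it is assembled pointwise from $T$ and the first derivatives of $\log c$, such that $c^{-m+1}\tilde{d}^s\big(c^{m-1}T\big)=\bar{d}^sT+U'\,\g$; rearranging, $c^{m-1}\bar{d}^sT=\tilde{d}^s\big(c^{m-1}T\big)-c^{m-2}U'\,\tilde{g}$. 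Substituting back, $\tilde{\alpha}=\tilde{d}^s\big(c^{m-1}T\big)+c^{m-2}(U-U')\,\tilde{g}$, which is exactly the gauge \eqref{ng} for $\tilde{g}$, with $c^{m-1}T\in C^\infty_c(\mathcal N;\mathcal S^{m-1})$ and $c^{m-2}(U-U')\in C^\infty_c(\mathcal N;\mathcal S^{m-2})$. This establishes the general assertion, and with it the corollary. There is no serious analytic obstacle here, since all the substantive work is already contained in Lemmas~\ref{gauge}--\ref{conf_inv_lem} and Theorems~\ref{t1}--\ref{t2}; the one point that requires attention is keeping the conformal weights $c^{-m+1}$ and $c^{m-1}$ consistent across the two uses of the conformal-change formulas — in particular Lemma~\ref{conf_inv_lem} must be applied to $c^{m-1}T$ rather than to $T$ — together with confirming that each of the auxiliary metric multiples ($U$, $U'$, and their difference) inherits compact support from $T$ and $\tilde{\alpha}$.
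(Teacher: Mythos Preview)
Your argument is correct and follows exactly the route the paper intends: the paper presents the corollary as an immediate consequence of equation~\eqref{affine_conf} and Lemma~\ref{conf_inv_lem} together with Theorems~\ref{t1}--\ref{t2}, and you have simply supplied the straightforward details of that combination, including the correct application of Lemma~\ref{conf_inv_lem} to $c^{m-1}T$ and the observation that the auxiliary tensor $U'$ inherits compact support from $T$.
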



\section{Injectivity of $\mathcal L$ in stationary geometries}
\label{thm1section}
Suppose that $(\M,\g)$ is as in Theorem~\ref{t1}. We are interested in the question of injectivity of the light ray transform. Owing to the conformal invariance of the light ray transform on scalar functions (see Section~\ref{conformal invariance}), we will work with the conformally rescaled metrics $\g_c$ and $g_c$ as discussed in Section~\ref{stationarycase}. For the remainder of this section, we abuse the notation slightly and write $\mathcal L$ to denote the light ray transform on $(\M,\g_c)$ and also identify functions and tensors in $\M$ with their copies in $\Phi^{-1}(\M)$ without explicitly writing the pull-back.

\begin{lemma}
\label{lem_1}
Let $\beta:I \to \M$ be a maximal null geodesic on $(\M,\g_c)$ and write $\beta(s) = (a(s), b(s))$
where $a$ and $b$ are paths on $\R$ and $M$ respectively. Let $T \in \R$. Then $\beta_T:I \to \R$ defined through $\beta_T(s) = (a(s) + T, b(s))$ is a maximal null geodesic on $\M$.
\end{lemma}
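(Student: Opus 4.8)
The plan is to exploit the $t$-translation symmetry of the metric $\Phi^*\g_c$, which by \eqref{conf} has components independent of the $t$-variable. Concretely, let $\psi_T:\R\times M\to\R\times M$ be the translation $\psi_T(t,x)=(t+T,x)$. Since the coefficients $-1$, $\eta_c$ and $g_c$ in \eqref{conf} depend only on $x\in M$, we have $\psi_T^*(\Phi^*\g_c)=\Phi^*\g_c$; that is, $\psi_T$ is an isometry of $(\M,\g_c)$ (equivalently, $\p_t$ is a Killing field, which is exactly how the stationary structure was built in Section~\ref{stationary_structure}). Isometries map geodesics to geodesics and preserve the causal character of tangent vectors, so $\psi_T\circ\beta$ is again a null geodesic, affinely parametrized, and in the coordinates $\beta(s)=(a(s),b(s))$ it is precisely $\beta_T(s)=(a(s)+T,b(s))$. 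This is the core of the argument.

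The remaining points are bookkeeping. First I would note that $\beta_T$ solves the same geodesic ODE: writing the geodesic equations for $\Phi^*\g_c$ in the $(t,x)$ coordinates, the Christoffel symbols $\bar\Gamma^i_{jk}$ are functions of $x$ alone and the $t$-equation has no $\bar\Gamma^0_{00}$-type obstruction (more precisely every Christoffel symbol is $T$-independent), so if $(a(s),b(s))$ solves the system then so does $(a(s)+T,b(s))$ with the same velocity $\dot\beta$; hence $\beta_T$ is a geodesic and $\g_c(\dot\beta_T,\dot\beta_T)=\g_c(\dot\beta,\dot\beta)=0$, so it is null. One can phrase this either via the isometry $\psi_T$ or directly from the ODE; I would present the isometry version as it is cleanest and needs no computation. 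Second, I must check maximality (inextensibility): $\psi_T$ is a diffeomorphism of $\M$ onto itself, so it carries an inextensible curve to an inextensible curve on the same parameter interval $I$; thus $\beta_T$ is defined on all of $I$ and is maximal precisely because $\beta$ is. Finally, future-pointing is preserved since $\psi_T$ preserves the time orientation (it is generated by the timelike Killing field $\p_t$), though the statement as given only asks for "maximal null geodesic" so this is a minor remark.

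I do not expect a genuine obstacle here; the only thing requiring a little care is making sure the domain of $\beta_T$ is exactly $I$ and not a proper subinterval, i.e. that translating does not cause the curve to exit $\M=\Phi(\R\times M)$ sooner or later. But since $\psi_T$ maps $\Phi(\R\times M)$ bijectively onto itself (it only shifts the $\R$-factor, leaving $M$ untouched), the escape times are unchanged and $\beta_T(I)\subset\M$ with $\beta_T$ inextensible in $\M$. So the proof reduces to: (1) observe $\p_t$ is Killing for $\g_c$, hence $\psi_T$ is an isometry of $\M$; (2) conclude $\beta_T=\psi_T\circ\beta$ is an affinely parametrized null geodesic; (3) note $\psi_T$ is a self-diffeomorphism of $\M$, so inextensibility is preserved and the parameter interval stays $I$.
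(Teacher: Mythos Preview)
Your proof is correct and follows the same idea as the paper: the paper's proof is a one-liner noting that the components of $\g_c(t,x)$ are independent of $t$, and your argument via the isometry $\psi_T$ is simply a more detailed spelling-out of this observation, including the (reasonable) check that inextensibility is preserved.
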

\begin{proof}
This follows immediately from the fact that the components of $\g_c(t,x)$ are independent of the time-coordinate $t$.
\end{proof}

Let $f \in C_c^\infty(\M)$ and suppose that $\beta:I \to \mathcal M$ is a maximal null geodesic. Define $\beta_T:I\to \mathcal M$ as translations of $\beta(s)$ along the time coordinate $t$ analogously as above. Then;
\begin{equation}
\label{int_identity}
\begin{aligned}
&\int_\R e^{-\iota\tau T} \mathcal L_{\beta_T}f\,dT
= \int_\R \int_I e^{-\iota\tau T} f(a(s)+T, b(s)) \,dT \,ds
\\&
= \int_I e^{\iota\tau a(s)} \int_\R e^{-\iota\tau r} f(r, b(s)) \,dr \,ds
= \int_I e^{\iota\tau a(s)} \hat f(\tau, b(s)) \,ds.
\end{aligned}
\end{equation}
with $\hat{f}$ denoting the Fourier transform\footnote{We use the notation $\iota$ for the imaginary unit to avoid confusion with the indices.} in $t$. We define the integral transform 
\begin{align}
\label{G_transform}
\mathscr I f(b)= \int_I f(b(s)) \,ds,
\end{align}
where $b = \pi \circ \beta$, with $\pi : \M \to M$ the natural projection and $\beta$ a null geodesic on $\M$.
 
Let us analyze $\mathscr I$ further. Referring to $\beta(s) = (a(s), b(s))$ in Lemma~\ref{lem_1}, we use the shorthand notations
$$
\dot a = \frac{d a}{ds}, \quad
\dot b = \frac{d b}{ds}, \quad |\dot b|_{g_c} = |\dot b|.
$$
As $\dot \beta$ is lightlike, there holds 
$$
-\dot a^2 + 2 \dot{a} \eta_c \dot b + |\dot b|^2 = 0.
$$
Therefore 
\begin{align}
\label{ode1}
\dot a = \eta_c \dot b \pm \sqrt{(\eta_c \dot b)^2 + |\dot b|^2}.  
\end{align}
Let $\bar\Gamma^i_{jk}$ denote the Christoffel symbols on $(\M,\g_c)$ and observe that $\bar\Gamma^i_{00}=0$ for $i=1,\ldots,n$. Using this and the definition of a null geodesic, we see that $b$ satisfies the equation 
\begin{align}
\label{ode2}
\frac{d^2 b^i}{ds^2} 
+ \bar\Gamma^i_{jk}(b(s)) \dot b^j \dot b^k + 2 \bar\Gamma^i_{0k} \dot a \dot b^k = 0, 
\end{align}
for $i=1,\ldots,n$. We can choose, without loss of generality, the positive sign in equation \eqref{ode1}. Indeed, suppose that $(a_{+}(s),b_{+}(s))$ solves \eqref{ode1}--\eqref{ode2} with the the positive sign in \eqref{ode1} for $s \in I$. Then, $(a_-(s),b_-(s)):=(a_{+}(-s),b_+(-s))$ with $s \in -I$ solves the same two equations with the negative sign in \eqref{ode1}. Hence, the choice of sign corresponds to affine re-parametrizations of a fixed null geodesic. For this reason, we will just consider the positive sign in \eqref{ode1}.

Now, equation \eqref{ode2} can be recast in the form 
\begin{align}
\label{b_curves_1}
\nabla^{g_c}_{\dot b} \dot b = G(b, \dot b),
\end{align}
where 
\begin{align}
\label{G_def_1}
G^i(b,\dot b):=(\Gamma^i_{jk} - \bar\Gamma^i_{jk}) \dot b^j \dot b^k - 2 \bar{\Gamma}^i_{0k} (\eta_c \dot b + \sqrt{(\eta_c \dot b)^2 + |\dot b|^2}) \dot b^k,
\end{align}
where $\Gamma^i_{jk}$ denotes the Christoffel symbols on $(M,g_c)$. We will now simplify the latter expression and show that the curves $b \in \mathscr G$ are coordinate invariant in $\M$. To see this, we first observe that
\[
\bar{g}^{-1}_c = \left( \frac{\kappa-|\eta|_{g}^2}{\kappa} \right)\left[\begin{array}{cc} 
-1 & \eta_c^{\sharp} \\
(\eta^{\sharp}_c)^T & \frac{\kappa}{\kappa-|\eta|_{g}^2}\,g_c^{-1}-\eta_c^{\sharp}\otimes\eta_c^{\sharp}
\end{array} \right]
\]
where $\eta_c^{\sharp}$ denotes the canonical vector that is dual to the one-form $\eta_c$ and $^T$ denotes the transposition operation. Now, using the definition of the Christoffel symbols together with the fact that the coefficients of the metric are time-independent, we write
\begin{multline*}
\Gamma^i_{jk}-\bar{\Gamma}^i_{jk}=\underbrace{-\frac{1}{2}(\bar{g}_c)^{i0}\left((\bar{g}_c)_{0,j;k}+(\bar{g}_c)_{0,k;j}\right)}_{I} +\\
\underbrace{\frac{1}{2}((g_c)^{im}-(\bar{g}_c)^{im})\left((g_c)_{mj;k}+(g_c)_{jm;k}-(g_c)_{jk;m}\right)}_{II}
\end{multline*}
where the term $II$ involves a summation over the index $m=1,\ldots,n$. The term $I$ reduces as follows
\[
I= -\frac{1}{2}(\frac{\kappa-|\eta|_{g}^2}{\kappa})(\eta_c^\sharp)^i  \left((\eta_c)_{j;k} + (\eta_c)_{k;j}\right)
\]
Similarly, the term $II$ reduces as follows
\[
\begin{aligned}
II&= \frac{1}{2} ((g_c)^{im}-(\bar{g}_c)^{im})\left((g_c)_{mj;k}+(g_c)_{jm;k}-(g_c)_{jk;m}\right)\\
&= \frac{1}{2} (\frac{\kappa-|\eta|_{g}^2}{\kappa}) (\eta_c^{\sharp})^i (\eta_c^{\sharp})^m \left((g_c)_{mj;k}+(g_c)_{jm;k}-(g_c)_{jk;m}\right)\\
&=\frac{1}{2} (\frac{\kappa-|\eta|_{g}^2}{\kappa}) (\eta_c^{\sharp})^i(\eta_c)_l (g_c)^{ml} \left((g_c)_{mj;k}+(g_c)_{jm;k}-(g_c)_{jk;m}\right)\\
&=(\frac{\kappa-|\eta|_{g}^2}{\kappa}) (\eta_c^{\sharp})^i(\eta_c)_l\Gamma^{l}_{jk}.
\end{aligned}
\]
Combining the expressions for $I$ and $II$ we deduce that
$$ (\Gamma^i_{jk} - \bar\Gamma^i_{jk}) \dot b^j \dot b^k=-(\frac{\kappa-|\eta|_{g}^2}{\kappa})\left((\nabla^{g_c}_{\dot b}\eta_c)\,\dot b\right)\,(\eta_c^{\sharp})^i.$$
We now consider the last term in the expression for $G$. Using the definition of the Christoffel symbols again and the expression of the inverse matrix $\bar g_c^{-1}$ above, this reduces as follows
\[
\begin{aligned}
2\bar{\Gamma}^{i}_{0k}\dot{b}^k&=\bar{g}_c^{im} ((g_c)_{m0;k}-(g_c)_{0k;m})\dot{b}^k=\bar{g}_c^{im} ((\eta_c)_{m;k}-(\eta_c)_{k;m})\dot b^k\\
&=\left(g_c^{im}-\frac{\kappa-|\eta|_{g}^2}{\kappa}(\eta^\sharp_c)^i(\eta^\sharp_c)^m\right) \left((\eta_c)_{m;k}-(\eta_c)_{k;m}\right)\dot b^k.
\end{aligned}
\]
Recalling that $(d\eta_c)_{mk}=(\eta_c)_{m;k}-(\eta_c)_{k;m}$, we conclude that $G$ can be rewritten as given by equation \eqref{G_def}, thus establishing that it is an invariantly defined vector field on $M$. Let us emphasize that the parametrization of the curve $b(s)$ in $M$ with $s \in I$ is not a unit-speed parametrization and is directly induced by the initial choice of an affine parametrization for the null geodesic $\beta$ in $\M$.


\begin{theorem}
\label{I_injectivity}
If $\mathscr I$ is injective then $\mathcal L$ is also injective.
\end{theorem}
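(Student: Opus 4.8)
The plan is to exploit the time-translation invariance of the light ray transform (Lemma~\ref{lem_1}) together with the Fourier identity \eqref{int_identity}, and then to recover $f$ one Taylor coefficient at a time in the Fourier-dual variable, invoking injectivity of $\mathscr I$ at each step.

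First I would suppose that $f\in C^\infty_c(\M)$ satisfies $\mathcal L_\beta f=0$ for every maximal null geodesic $\beta$ in $\M$. By Lemma~\ref{lem_1}, for each $T\in\R$ the time-translate $\beta_T$ is again a maximal null geodesic, so $\mathcal L_{\beta_T}f=0$ for all $T$. Feeding this into \eqref{int_identity} gives, for every $\tau\in\R$ and every maximal null geodesic $\beta=(a,b)$,
\begin{equation}\label{starplan}
\int_I e^{\iota\tau a(s)}\,\hat f(\tau,b(s))\,ds=0,
\end{equation}
where $\hat f$ denotes the Fourier transform of $f$ in $t$. Here $b=\pi\circ\beta$ runs over precisely the curves on which $\mathscr I$ is defined, $\partial_\tau^k\hat f(\tau,x)=(-\iota)^k\int_\R r^k f(r,x)\,dr$, and each $\partial_\tau^k\hat f(\tau,\cdot)$ is smooth on $M$ with support contained in the compact set $\pi(\supp f)$.

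Next I would prove, by induction on $k\ge 0$, that $\partial_\tau^k\hat f(0,\cdot)\equiv 0$ on $M$. For $k=0$, setting $\tau=0$ in \eqref{starplan} gives $\mathscr I(\hat f(0,\cdot))(b)=\int_I\hat f(0,b(s))\,ds=0$ for every admissible $b$, so $\hat f(0,\cdot)\equiv 0$ by injectivity of $\mathscr I$ (applied to real and imaginary parts). For the inductive step, assume $\partial_\tau^j\hat f(0,\cdot)\equiv 0$ for $0\le j<k$. Differentiating \eqref{starplan} $k$ times in $\tau$ and evaluating at $\tau=0$, the Leibniz rule produces the integrand $\sum_{j=0}^{k}\binom{k}{j}(\iota a(s))^{j}\,\partial_\tau^{k-j}\hat f(0,b(s))$, in which every summand with $j\ge 1$ vanishes identically by the inductive hypothesis, regardless of the factor $a(s)^{j}$. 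Only the $j=0$ term survives, so $\int_I\partial_\tau^k\hat f(0,b(s))\,ds=\mathscr I(\partial_\tau^k\hat f(0,\cdot))(b)=0$ for all admissible $b$, whence $\partial_\tau^k\hat f(0,\cdot)\equiv 0$, again by injectivity of $\mathscr I$. Finally, fixing $x\in M$, the identity $\partial_\tau^k\hat f(0,x)=(-\iota)^k\int_\R r^k f(r,x)\,dr=0$ for all $k\ge 0$ shows that every moment of the compactly supported function $r\mapsto f(r,x)$ vanishes, forcing $f(\cdot,x)\equiv 0$; as $x$ was arbitrary, $f\equiv 0$.

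The point that needs care is the interchange of $\partial_\tau^k$ with $\int_I$ in the inductive step, together with the convergence of all the integrals over the possibly unbounded parameter interval $I$; this is where the geometric hypotheses implicitly enter, since under Hypothesis~\ref{hypo0} or Hypothesis~\ref{hypo1} the curves in $\mathscr G$ are non-trapping, so $\{s\in I:b(s)\in\pi(\supp f)\}$ is bounded and on that bounded interval $e^{\iota\tau a(s)}\hat f(\tau,b(s))$ and all its $\tau$-derivatives are continuous and bounded locally uniformly in $\tau$. I do not expect any deeper obstacle here. It is, however, worth recording why the seemingly more direct route---fixing a single $\tau\neq 0$ in \eqref{starplan} and trying to remove the weight $e^{\iota\tau a(s)}$ so as to read $\hat f(\tau,\cdot)$ off an attenuated ray transform over $\mathscr G$---is not taken: no injectivity result is available for the relevant attenuated transform, whereas the Taylor-coefficient argument uses only the unattenuated $\mathscr I$.
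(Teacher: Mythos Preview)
Your proposal is correct and follows essentially the same route as the paper: differentiate the Fourier identity \eqref{int_identity} repeatedly at $\tau=0$, use induction together with injectivity of $\mathscr I$ to kill each Taylor coefficient $\partial_\tau^k\hat f(0,\cdot)$, and conclude from compact support in $t$ (the paper phrases the last step as analyticity of $\hat f(\tau,\cdot)$ in $\tau$, which is equivalent to your moment argument). Your added care about interchanging $\partial_\tau^k$ with $\int_I$ and about complex-valued integrands is more than the paper spells out, but the core argument is the same.
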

\begin{proof}
Suppose that $\mathcal L_\beta f = 0$ where $\beta:I \to M$ denotes any maximal null geodesic in $\mathcal M$ with maximal interval $I$. Differentiating equation \eqref{int_identity} $k$ times with respect to $\tau$ and evaluating at $\tau=0$, we obtain: 
$$0=\sum_{j=0}^k \int_I (\iota a(s))^{k-j}\, \p_\tau^{j}\hat{f}(0,b(s))\,ds\quad \forall\, b \in \mathscr G$$
Setting $k=0$, we have
\begin{align*}\notag 
(\mathscr I \hat f(0,\cdot))(b) = 0\quad \forall\, b\in \mathscr G.
\end{align*}
By injectivity of $\mathscr I$, it holds that $\hat f(0,\cdot) = 0$.
In a similar manner, by using induction on $k$ together with the injectivity of $\mathscr I$, we deduce that
$$\p_\tau^k \hat f(0,\cdot) = 0, \quad \forall\, k \in \N.$$
As $f(t,\cdot)$ is compactly supported in $t$, $\hat f(\tau,\cdot)$ is analytic in $\tau$,
and thus $f$ vanishes everywhere.
\end{proof}

\subsection{Proof of Theorem~\ref{t1}}

It is clear that Theorem~\ref{t1} follows, once we prove injectivity of the ray transform $\mathscr I$ along all maximal curves $b \in \mathscr G$. We prove this under the assumption that Hypothesis~\ref{hypo0} or Hypothesis~\ref{hypo1} holds. In fact, the transform $\mathscr I$ has been studied for more general vector fields $G(z,v)$ than the one given by expression \eqref{G_def} and invertibility is known to hold under some assumptions. When Hypothesis~\ref{hypo1} holds on $(\M,\g)$, injectivity of $\mathscr I$ follows from \cite[Theorem 4.2]{UV} in the appendix by Hanming Zhou and the remarks immediately following that theorem. 

To prove Theorem~\ref{t1} under Hypothesis~\ref{hypo0}, we will use \cite[Theorem 1]{FSU}. There, injectivity of the map $\mathscr I$ is proved under the assumption that the manifold $(M,g_c)$ and the curves in $\mathscr G$ are real analytic and that there are no conjugate points along any such curve. Hence, to conclude the proof, we need to show that if Hypothesis~\ref{hypo0} holds in $\M$, the aforementioned assumptions are satisfied on curves in $\mathscr G$.  

To this end, let us first recall the definition of {\em conjugate points} along curves $b \in \mathscr G$ (following \cite{FSU}) and conjugate points along null geodesics $\beta$ in $\M$. Given any $(s,\xi) \in TM$, we define the exponential map $\widetilde{\exp}_x (s,\xi)=b(s)$ where $b\in \mathscr G$ with $b(0)=x$ and $\dot{b}(0)=\xi$. Subsequently, we say that the point $b(s_0)$ is conjugate to $x$ if $(D_{s,\xi}\widetilde{\exp}_{x})(s_0,\xi_0)$ has rank less than $n$, where $\xi_0=\dot{b}(0)$. The conjugate points on $\M$ are defined analogously, in terms of the exponential map, $\exp:T\M \to T\M$ of the Lorentzian manifold $(\M,\g_c)$ along null geodesics (see for example \cite[Definition 10.9]{On}). 

\begin{lemma}
\label{non-optimal}
If $C^+_N(p)=\emptyset$ for all $p \in \mathcal M$, then there are no conjugate points along any curve $b \in \mathscr G$.
\end{lemma}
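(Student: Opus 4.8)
The plan is to relate conjugate points of the curve family $\mathscr G$ on $M$ to conjugate points along null geodesics $\beta$ in $\mathcal M$, and then invoke the standard Lorentzian fact that a point is a null cut point of $p$ only after the first null conjugate point — so the hypothesis $C^+_N(p)=\emptyset$ rules out null conjugate points, which in turn rules out conjugate points for $\mathscr G$. The conceptual bridge is the correspondence, established in Section~\ref{thm1section}, between a null geodesic $\beta(s)=(a(s),b(s))$ in $(\mathcal M,\g_c)$ and its spatial projection $b\in\mathscr G$: the map $(t,x)\mapsto(t+T,x)$ (Lemma~\ref{lem_1}) and the reconstruction of $a$ from $b$ via \eqref{ode1} mean that the Lorentzian null-geodesic flow is, up to the $\R$-translation symmetry in $t$, a suspension of the flow generating $\mathscr G$. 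I would first make this precise: fix $x\in M$, $\xi\in T_xM$, let $b(s)=\widetilde{\exp}_x(s,\xi)$ and let $\beta(s)=(a(s),b(s))$ be the corresponding null geodesic in $\mathcal M$ with $a(0)=0$ and $a$ determined by \eqref{ode1} with the $+$ sign. Then I claim that $b(s_0)$ is conjugate to $x$ along $b$ (in the $\widetilde{\exp}$ sense) if and only if $\beta(s_0)$ is conjugate to $\beta(0)$ along $\beta$ (in the Lorentzian sense).

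The key step is proving this equivalence of conjugacy. I would do it by comparing Jacobi fields. A Jacobi field along $\beta$ that vanishes at $s=0$ comes from a variation of $\beta$ through null geodesics keeping the initial point fixed; decomposing such a variation into its $\partial_t$-component and its $TM$-component, the $\partial_t$-translations (Lemma~\ref{lem_1}) contribute a Jacobi field that never vanishes again (it is the Killing field $\partial_t$ restricted to $\beta$, which is nowhere zero since $\partial_t$ is timelike hence non-null, while $\dot\beta$ is null), and they account for exactly one dimension of the kernel that is always present. Modulo this one direction, the remaining variations of null geodesics through $\beta(0)$ project precisely to variations of $b$ through curves in $\mathscr G$ with fixed initial point, because the constraint \eqref{ode1} lets one recover the time component $a$ (and the initial speed, up to the affine scaling) from the spatial data. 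Hence the rank of $D\exp$ along $\beta$ drops below the expected value exactly when the rank of $D\widetilde{\exp}_x$ drops below $n$. This is the step I expect to be the main obstacle: one must be careful about the affine-reparametrization freedom (which adds the radial Jacobi field $s\dot\beta(s)$, also present on the $M$ side as $s\dot b(s)$), about the fact that $\mathscr G$-curves are not unit speed, and about correctly counting dimensions so that the "$+1$" from the $\partial_t$ Killing direction is separated cleanly and does not interfere with the genuine conjugacy count.

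Once the equivalence is in hand, I would finish as follows. Suppose, for contradiction, that some $b\in\mathscr G$ has a conjugate pair $b(0)=x$, $b(s_0)$ with $s_0>0$ (the case $s_0<0$ is handled by the reversal $(a(-s),b(-s))$ used in Section~\ref{thm1section}). By the equivalence, $\beta(s_0)$ is null-conjugate to $p:=\beta(0)$ along the future-pointing null geodesic $\beta$. By the standard Lorentzian comparison result that the first future null conjugate point along $\beta$ occurs no later than the future null cut point — equivalently, that beyond the first conjugate point $\tau(p,\beta(s))>0$, see \cite[Section 5]{BE} or \cite[Theorem 10.72 and Proposition 10.77]{On} — it follows that $C_N^+(p)\neq\emptyset$, with a cut point at or before $\beta(s_0)$. (Here one also uses that $\beta$ stays in $\mathcal M$, or passes to the ambient $\mathcal N$ via the embedding $\Phi$ and uses that null conjugate points are preserved; since the hypothesis is stated for all $p\in\mathcal M$ this causes no trouble.) This contradicts $C_N^+(p)=\emptyset$. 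Hence no curve in $\mathscr G$ has conjugate points, which is the assertion of the lemma.
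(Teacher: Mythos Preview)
Your overall strategy is sound, and the final step (null conjugate point $\Rightarrow$ null cut point, via \cite[Proposition~10.48]{On}) is exactly what the paper uses in one branch of its argument. The gap is in the direction of the equivalence you actually need: ``$\mathscr G$-conjugacy at $b(s_0)$ implies Lorentzian conjugacy at $\beta(s_0)$.'' Your proposed mechanism does not work. If $\widetilde J$ is a $\mathscr G$-Jacobi field along $b$ with $\widetilde J(0)=\widetilde J(s_0)=0$, coming from a variation $b_r$ with $b_r(0)=x$, then the lifted null geodesics $\beta_r=(a_r,b_r)$ with $a_r(0)=0$ produce a Lorentzian Jacobi field $J=(\partial_r a_r|_{r=0},\widetilde J)$ which vanishes at $0$ but whose $\partial_t$-component at $s_0$, namely $\partial_r a_r(s_0)|_{r=0}$, has no reason to vanish: $a_r(s_0)$ is an integral over the whole curve $b_r$ and is not determined by the endpoint $b_r(s_0)$ alone. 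You suggest absorbing this residual $\partial_t$-component using the Killing Jacobi field $\partial_t|_\beta$, but that field is nowhere zero---in particular it does not vanish at $s=0$---so it is not in the space of Jacobi fields with $J(0)=0$ and cannot be used to adjust $J(s_0)$ while preserving $J(0)=0$. (Relatedly, the $\partial_t$-translations of Lemma~\ref{lem_1} move $\beta(0)$, so they are not variations ``through $\beta(0)$'' at all.)

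The paper does not attempt the equivalence; it replaces it by a dichotomy. With $b_r$ and $\beta_r$ as above, either $a_r(s_0)-a(s_0)=O(r^2)$ for all small $r$---then the lifted Jacobi field does vanish at $s_0$, one gets a genuine Lorentzian conjugate point, and your final step applies---or along a sequence $r_k\to 0$ the time displacement $|a_{r_k}(s_0)-a(s_0)|$ dominates the spatial displacement $\dist(b_{r_k}(s_0),b(s_0))=O(r_k^2)$. In that second case $\beta(s_0)$ and $\beta_{r_k}(s_0)$ are causally related, and concatenating the null segment from $\beta(0)$ to one of them with this short causal arc yields, by \cite[Proposition~10.46]{On}, positive time-separation between $\beta(0)$ and the other endpoint, hence $C_N^+(\beta(0))\ne\emptyset$. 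This causality argument is precisely the missing idea needed to handle the scenario your equivalence would have to rule out.
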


\begin{proof}
Suppose for contrary that there exists a curve $b \in \mathscr G$ with a pair of conjugate points $b(0)$ and $b(s_0)$. The above definition of conjugate points implies in particular that there exists a one-parameter family of curves $b_r$ in $\mathscr G$ with $r$ in a small neighborhood of origin, such that 
$$b_r(0)=b(0), \quad \dot{b}_r(0)=\dot{b}(0)+rv$$ for some fixed $v \in T_{b(0)}M$ and 
\begin{align}\label{almost}\dist(b_r(s_0),b(s_0))\leq C r^2\end{align}for some uniform constant $C>0$, where $\dist(\cdot,\cdot)$ is the Riemannian distance function on $(M,g_c)$ (note that $b_0\equiv b$). We define the functions $a_r(s)$ as the solutions to the following differential equation: 
$$\frac{da_r}{ds}= \eta_c \dot b_r \pm \sqrt{(\eta_c \dot b_r)^2 + |\dot b_r|^2}\quad \text{and}\quad a_r(0)=0,$$
where the sign $\pm$ is chosen in order to make the curve $(a_r(s),b_r(s))$ future-pointing. Observe that the curves $\beta_r(s)=(a_r(s),b_r(s))$ define a family of maximal null geodesics in $\mathcal M$ (in what follows, we will drop the subscript $r$ when $r=0$).

Next, we observe that there exists a constant $\epsilon>0$ depending only on $\eta_c$ and $g_c$, such that if 
\begin{align}
\label{aux_const}
|\dist(b_{r_k}(s_0),b(s_0))\,|<\epsilon\, |a_{r_k}(s_0)-a(s_0)|\end{align}
for a sequence $r_k \to 0$, then there exists a causal path between $\beta(s_0)$ and $\beta_{r_k}(s_0)$ for all $k$ sufficiently large. 

If \eqref{aux_const} does not hold for any sequence $r_k \to 0$, then in particular it implies that $|a_{r}(s_0)-a(s_0)|<\frac{C}{\epsilon}r^2$ and all $r$ sufficiently close to zero. But then the first variation of $\beta$ among the family of null geodesics $\beta_r$ must vanish at the point $\beta(s_0)$ and consequently the point $\beta(s_0)$ is a conjugate point to $\beta(0)$ along $\beta$. By \cite[Proposition 10.48]{On}, there exists a future pointing time-like curve connecting $\beta(0)$ to $\beta(s_0)$ and therefore there exists a null cut point on $\beta$ corresponding to $\beta(0)$ which is a contradiction. 

Thus, we assume that \eqref{aux_const} holds, for a sequence $r_k \to 0$ and consequently that there exists a future pointing causal path connecting $\beta(s_0)$ to $\beta_{r_k}(s_0)$, or vise versa, for some $k$. First, we consider the case where this future pointing causal curve is from $\beta(s_0)$ to $\beta_{r_k}(s_0)$. Then the points $\beta(0)$ and $\beta_{r_k}(s_0)$ can be connected through the concatenation of the curve $\beta$ that connects $\beta(0)$ to $\beta(s_0)$ and the causal curve that connects $\beta(s_0)$ to $\beta_{r_k}(s_0)$. By \cite[Proposition 10.46]{On}, we conclude that $\tau(\beta_{r_k}(0),\beta_{r_k}(s_0)) \neq 0$, which implies that $C^+_N(\beta(0))\neq \emptyset$. In the other case that the future pointing causal curve connects $\beta_{r_k}(s_0)$ to $\beta(s_0)$, we can use a similar argument to conclude that $\tau(\beta(0),\beta(s_0)) \neq 0$ and subsequently that $C^+_N(\beta(0)) \neq \emptyset$.     
\end{proof}

We are now ready to prove injectivity of the ray transform $\mathscr I$ when Hypothesis~\ref{hypo0} holds. To apply \cite[Theorem 1]{FSU}, we need to show that curves in $\mathscr G$ are real analytic and that no pair of conjugate points exist on any curve in $\mathscr G$. The latter was shown in Lemma~\ref{non-optimal}. Moreover, since $(\mathcal N,\g)$, $N$ and the Killing vector field $\mathcal E$ are assumed to be analytic, it follows from Section~\ref{stationary_structure} that $\eta_c$ and $g_c$ are real analytic as well and consequently the curves $b \in \mathscr G$ are also real analytic as they solve a second order linear ordinary differential equation \eqref{generalcurve} with real analytic coefficients. 


Before closing the section, we give some examples to demonstrate the convex foliation condition in Hypothesis~\ref{hypo1}.

\begin{example}
\label{example_1}
Let
$$ (\M,\g) \cong (\R\times M, -dt^2+g)$$
with $g$ independent of $t$. First, note that maximal null geodesics $\beta$ in $\M$ can be parameterized as $(t+T;\gamma(t;x,v))$ where $\gamma(t;x,v)$ denotes a unit speed geodesic on $M$ with initial data $(x,v) \in \p_{\text{in}}SM$. This is in complete agreement with equation \eqref{generalcurve}, since the function $G$ vanishes identically in this case. Therefore, the notion of $\mathscr G$ strict convexity as introduced in Section~\ref{stationarycase} coincides with the classical notion of strict convexity in the sense of second fundamental form, and Hypothesis~\ref{hypo1} simply states that $\p M$ is strictly convex and that $(M,g)$ admits a foliation by strictly convex hypersurfaces. 
\end{example}

\begin{example}
\label{example_2}
Let us consider the more general case
$$ (\M,\g) \cong (\R \times M, -dt^2 + \eta \otimes dt + dt \otimes \eta + g),$$
where $(M,g)$ is a Riemannian manifold of dimension $n \geq 3$ with boundary and $\eta$ is a covector field on $M$. Building on the previous example, it is clear that Hypothesis~\ref{hypo1} holds as long as $\p M$ is strictly convex in the classical sense and that $(M,g)$ admits a foliation by strictly convex hypersurfaces, provided that $|\eta|_g$ is sufficiently small in the $C^2(M)$ topology.
\end{example}

\section{Proof of Theorem 2} 
\label{thm2section}
We start by considering an embedding of the form \eqref{stationary} with $\eta \equiv 0$ and satisfying Hypothesis~\ref{hypo2}. Throughout this section and for the sake of brevity of notation we will assume without loss of generality that $\kappa \equiv 1$ so as to discard the notations $\g_c$ and $g_c$ (see Section~\ref{conformal invariance}). Observe that due to the more restrictive form of the metric (compared to the stationary case), null geodesics in $(\M,\g)$ can conveniently be parameterized as
$$\beta(\cdot;r_0,x,v)=(r+r_0;\gamma(r;x,v)),$$ 
with $r_0 \in \R$, $(x,v) \in \p_{\text{in}}SM$ and $\gamma(\cdot;x,v)$ denoting a unit speed geodesic with initial data $(x,v)\in \p_{\text{in}}SM$. 

Owing to this identification of null geodesics, we can recast the light ray transform on $\R \times M$ for $\alpha \in C_c^\infty(\R \times M; \mathcal S^m)$ as 
$$
(\mathcal L \,\alpha)(r_0,x,v) = \int_0^{\tau_+(x,v)} \alpha((r+r_0,\gamma(r;x,v)),(1, \dot \gamma(r;x,v))) dr, 
$$
for all $(r_0,x,v) \in \R \times \p_{\text{in}} SM$. 
\subsection{Notations}
\def\i{\pmb i}
\def\j{\pmb j}
\def\pr{\pmb p}
\def\tfs{\text{tfs}}
\def\t{\text{t}}
 
For symmetric tensors $f$ and $h$,
we denote the symmetrized tensor product simply by $fh$.
In particular, if $f$ and $h$ are 1-forms, then
$$
f h(v,w) = \frac 1 2 (f(v) h(w) + f(w) h(w)), 
\quad v,w \in TM.
$$
Following \cite{DS}, we next define three operators. The operator $$\i : C^\infty(M;S^m) \to C^\infty(M;S^{m+2})$$ is defined through $\i f = fg$, where we recall that $S^m$ denotes the bundle of symmetric tensors of rank $m$ on $M$. Next, the operator $\j$ is the trace with respect to $g$, that is, $$\j : C^\infty(M;S^{m+2}) \to C^\infty(M;S^{m})$$ is the adjoint of $\i$, and in local coordinates we can write,
$(\j f)_{j_1 \dots j_n} = g^{jk} f_{jkj_1 \dots j_n}$.
The composition $\j \i$ is self-adjoint and positive definite \cite[Lem. 2.3]{DS}. In particular, the inverse $(\j \i)^{-1}$ exists. 
Moreover, by the same lemma, the bundle $S^m$ has the orthogonal decomposition into sub-bundles $S^m = \text{Ker}(\j) \oplus \text{Ran} (\i)$. 
Finally, the operator 
$$\pr: C^\infty(M;S^m) \to C^\infty(M;S^{m})$$
is defined to be the orthogonal projection from $S^m$ to $\text{Ker}(\j)$, and it can be written as 
$$
\pr = 1-\i(\j\i)^{-1}\j,
$$
see \cite[Eq. (2.15)]{DS}.

\subsection{Helmholtz decomposition}

Let us first recall the Helmholtz decomposition as proven in \cite[Th. 3.3.2]{S}, that is, given any $\omega \in C^\infty(M; S^m)$, there are unique $\omega^s \in C^\infty(M; S^m)$ and $h \in C^\infty(M; S^{m-1})$ satisfying 
$$
\omega = \omega^s + d^s h, \quad \delta^s \omega^s = 0, \quad h|_{\p M} = 0,
$$
where $\delta^s$ is the adjoint of $d^s$. We say that $\omega$ is {\em solenoidal} if $\omega=\omega^s$. 

For a family $\omega \in C_c^\infty(\R; C^\infty(M; S^m))$ we define $\omega^s(t) = (\omega(t))^s$. 
As the corresponding potential $h(t)$ is obtained by solving the elliptic partial differential equation,
$$
\delta^s d^s h(t) = \delta^s \omega(t), \quad h(t)|_{\p M} = 0,
$$
we see that 
$h \in C_c^\infty(\R; C^\infty(M; S^{m-1}))$ and 
$\omega^s \in C_c^\infty(\R; C^\infty(M; S^m))$.

We define also the Fourier transform in time by 
$$
\widehat \omega(\tau) = \int_\R e^{-\iota \tau t} \omega(t) \,dt.
$$
Then $d^s \widehat \omega(\tau) = \widehat{d^s \omega}(\tau)$
and 
$\delta^s \widehat \omega(\tau) = \widehat{\delta^s \omega}(\tau)$. In particular, $\widehat \omega(\tau) = \widehat{\omega^s}(\tau) + d^s \widehat h(\tau)$
and $\delta^s \widehat{\omega^s}(\tau) = 0$.
As the Helmholtz decomposition of $\widehat \omega(\tau)$ is unique, we obtain 
    \begin{align}\label{helmholtz_fourier}
(\widehat \omega(\tau))^s = \widehat{\omega^s}(\tau). 
    \end{align}

\subsection{Trace-free Helmholtz decomposition}

We will next recall the trace-free Helmholtz decomposition
as discussed for example in \cite{DS}. By \cite[Th. 1.5]{DS}, for any $\omega \in C^\infty(M; S^m)$ there are unique $\omega^\tfs \in C^\infty(M; S^m)$, $h \in C^\infty(M; S^{m-1})$ and $\omega^\t \in C^\infty(M; S^{m-2})$ satisfying 
    \begin{align}\label{tf_helmholtz}
\omega = \omega^\tfs + \i\omega^\t + d^s h, \quad \delta^s \omega^\tfs = 0, \quad h|_{\p M} = 0,
\quad \j \omega^\tfs = 0,
\quad \j h = 0.
    \end{align}
This decomposition is obtained by first solving the following elliptic partial differential equation for $h$,
$$
\delta^s \pr d^s h = \delta^s \pr \omega, \quad h|_{\p M} = 0.
$$
Then $\omega^\t = (\j\i)^{-1} \j (\omega - d^s h)$
and $\omega^\tfs = \omega - \i \omega^\t - d^s h$.

The last equation $\j h = 0$ in the decomposition (\ref{tf_helmholtz}) is in fact a consequence of the first four equations. That is, if
    \begin{align}\label{tf_helmholtz_red}
\omega = \omega_0 + \i \omega_1 + d^s \omega_2, \quad \delta^s \omega_0 = 0, \quad \omega_2|_{\p M} = 0,
\quad \j \omega_0 = 0,
    \end{align}
then $\omega_0 = \omega^\tfs$, $\omega_1 = \omega^\t$ and $\omega_2 = h$.
Indeed, writing $\omega_0' = \omega_0 - \omega^\tfs$, $\omega_1' = \omega_1 - \omega^t$ and $\omega_2' = \omega_2 - h$, we obtain
$\omega_1' = -(\j\i)^{-1}\j d^s \omega_2'$. Then $\pr d^s \omega_2' = - \omega_0'$
and $\omega_2'$ solves 
$$
\delta^s \pr d^s \omega_2' = 0, \quad \omega_2'|_{\p M} = 0.
$$
Therefore $\omega_2' = 0$ and $\omega_2 = h$. Now also $\omega_0 = \omega^\tfs$ and $\omega_1 = \omega^\t$ by \cite[Th. 1.5]{DS}. We record the following consequence that will be useful in what follows. 
\begin{remark}
\label{remark_helmholtz}
{\em If $w=d^s h$ for some $h \in C^{\infty}(M;S^{m-1})$ satisfying $h|_{\p M}=0$, then $w^{\tfs}=0$ and $w^\t=0$.}
\end{remark}

Analogously to the previous section, for a family $\omega \in C_c^\infty(\R; C^\infty(M; S^m))$ we can define 
$$
\omega^\tfs(t) = (\omega(t))^\tfs,
\quad
\omega^\t(t) = (\omega(t))^\t,
$$
that gives smooth families of tensors that are compactly supported in time. 
Observe that $\i \widehat \omega(\tau) = \widehat{\i \omega}(\tau)$ and $\j \widehat \omega(\tau) = \widehat{\j \omega}(\tau)$, and analogously with (\ref{helmholtz_fourier}), we have
$$
(\widehat \omega(\tau))^\tfs = \widehat{\omega^\tfs}(\tau),
\quad 
(\widehat \omega(\tau))^\t = \widehat{\omega^\t}(\tau).
$$

\subsection{Injectivity of the light ray transform on tensors}

For the remainder of the paper and for the sake of brevity, we will abuse the notation slightly and identify tensors in $\M$ with their identical copies in $\Phi^{-1}(\M)$ without explicitly writing the embedding. Let $\alpha \in C^\infty(\M; \mathcal S^m)$
and suppose that $\mathcal L \alpha \equiv 0$. As $\bar g=-dt^2 + g$ and $\alpha$ is symmetric, we write
    \begin{align}\label{alpha_comp_1}
\alpha = f\,dt + \omega+b\,\g,
    \end{align}
where 
$$f \in C_c^\infty(\R; C^\infty(M; S^{m-1}))\quad \omega \in  C_c^\infty(\R; C^\infty(M; S^{m}))\quad b \in \mathcal C^{\infty}_c(\M;\mathcal S^{m-2}).$$

We can simplify \eqref{alpha_comp_1} further by considering the Helmholtz decomposition of $f$, that we denote by $f = f^s + d^sp$. To this end, we begin by writing 
\begin{align*}
d^sp\,dt=\bar{d}^s(p\,dt)+\p_tp\,\bar{g} -\p_tp\,g.
\end{align*}
Note that the term $\bar{d}^s(p\,dt)+\p_tp\,\bar{g}$ takes the form of the gauge \eqref{ng} and by Lemma~\ref{gauge} lies in the kernel of $\mathcal L$, while 
$$-\p_tp\,g \in C_c^\infty(\R; C^\infty(M; S^{m})).$$ 
In particular, we can replace $\omega$ with $\omega - \p_t p\,g$ in \eqref{alpha_comp_1}. As also $b\,\g$ is in the kernel of $\mathcal L$, we can assume without loss of generality that
   \begin{align}\label{alpha_comp}
\alpha = f\,dt + \omega, \quad f=f^s.
    \end{align}

We have the following Fourier slicing lemma

\begin{lemma}\label{lem_Fourier_slice2}
Suppose that $\alpha \in C^\infty_c(\R \times M;S^m)$
is of the form (\ref{alpha_comp}).
Then for $k=0,1,\dots$, and $(x,v) \in \p_{\text{in}} SM$ it holds that 
    \begin{align}\label{Fourier_slice1}
\p_\tau^k \widehat{\mathcal L \alpha}(\tau, x,v)|_{\tau = 0}
&= 
\mathcal I(\p_\tau^k\widehat f(\tau,\cdot)|_{\tau = 0})(x,v) + \sum_{j=0}^{k-1} \binom k j \mathcal R_{k-j}(\p_\tau^j\widehat f(\tau,\cdot)|_{\tau = 0})(x,v)
\\\notag&
+ 
\mathcal I(\p_\tau^k\widehat \omega(\tau,\cdot)|_{\tau = 0})(x,v) + \sum_{j=0}^{k-1} \binom k j \mathcal R_{k-j}(\p_\tau^j\widehat \omega(\tau,\cdot)|_{\tau = 0})(x,v),
    \end{align}
where 
    \begin{align*}
\mathcal R_j \omega(x,v) &=
\int_0^{\tau_+(x,v)} (\iota r)^{j} \omega(\gamma(r;x,v), \dot \gamma(r;x,v)) \,dr,
\quad \omega \in C_c^\infty(M;S^m).
    \end{align*}
\end{lemma}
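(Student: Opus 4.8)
The plan is to differentiate the expression for the light ray transform under the integral sign and Fourier transform in the time variable $t$, then carefully track which derivatives fall on the $t$-dependence of $\alpha$ via the chain rule. Recall that for $\alpha$ of the form \eqref{alpha_comp} we have
$$
(\mathcal L\alpha)(r_0,x,v) = \int_0^{\tau_+(x,v)} \big[ f((r+r_0,\gamma(r;x,v)),\dot\gamma(r;x,v)) + \omega((r+r_0,\gamma(r;x,v)),(1,\dot\gamma(r;x,v))) \big]\,dr,
$$
where I am using that the $dt$-component contributes $f$ evaluated on $\dot\gamma$ (since the tangent vector of $\beta$ has unit $t$-component) and $\omega$ is evaluated on the full spatial tangent lift. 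Taking the Fourier transform in $r_0$ turns the shift $r\mapsto r+r_0$ into a multiplication: for a function $\psi(t)$ with compact support,
$$
\int_\R e^{-\iota\tau r_0} \psi(r+r_0)\,dr_0 = e^{\iota\tau r} \widehat\psi(\tau).
$$
Applying this slicewise (i.e. for each fixed $r$, $x$, $v$) to $f$ and to $\omega$ gives
$$
\widehat{\mathcal L\alpha}(\tau,x,v) = \int_0^{\tau_+(x,v)} e^{\iota\tau r}\big[ \widehat f(\tau,\gamma(r;x,v),\dot\gamma(r;x,v)) + \widehat\omega(\tau,\gamma(r;x,v),(1,\dot\gamma(r;x,v))) \big]\,dr.
$$

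Next I differentiate $k$ times in $\tau$ and evaluate at $\tau=0$. The factor $e^{\iota\tau r}$ contributes derivatives that produce powers $(\iota r)^{k-j}$, and by the Leibniz rule
$$
\p_\tau^k\big(e^{\iota\tau r}\widehat f(\tau,\cdot)\big)\big|_{\tau=0} = \sum_{j=0}^k \binom k j (\iota r)^{k-j}\, \p_\tau^j \widehat f(\tau,\cdot)\big|_{\tau=0},
$$
and similarly for $\widehat\omega$. The term $j=k$ has $(\iota r)^0 = 1$ and, after integrating in $r$, yields exactly $\mathcal I(\p_\tau^k\widehat f(0,\cdot))(x,v) + \mathcal I(\p_\tau^k\widehat\omega(0,\cdot))(x,v)$ by definition of the geodesic ray transform \eqref{geodesicraydef}; here I use that $\widehat\omega(\tau,\cdot)$ is a tensor on $M$ (its $(1,\dot\gamma)$-lift restricts to the $S^m(M)$-lift $\dot\gamma^{\otimes m}$ up to the lower-order pieces already absorbed, so that integrating along $\gamma$ produces $\mathcal I$). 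The remaining terms $j=0,\dots,k-1$ carry the factor $(\iota r)^{k-j}$ and, after integration in $r$ from $0$ to $\tau_+(x,v)$, are by definition precisely $\mathcal R_{k-j}(\p_\tau^j\widehat f(0,\cdot))(x,v)$ and $\mathcal R_{k-j}(\p_\tau^j\widehat\omega(0,\cdot))(x,v)$, with the binomial weights $\binom k j$ inherited from the Leibniz expansion. Collecting the $f$ and $\omega$ contributions gives exactly \eqref{Fourier_slice1}.

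The only subtlety — and the point that deserves care rather than genuine difficulty — is the justification of differentiating under the integral and of commuting $\p_\tau^k$ past the Fourier integral: this is legitimate because $\alpha$ is smooth and compactly supported in $t$, so $\widehat f(\tau,\cdot)$ and $\widehat\omega(\tau,\cdot)$ are entire in $\tau$ with all $\tau$-derivatives bounded on compact sets uniformly in the spatial variables, and $\tau_+(x,v)<\infty$ by the non-trapping assumption, so the $r$-integral is over a bounded interval with a smooth bounded integrand. A second bookkeeping point is the identification of the $dt$-component: one must check that $f((r+r_0,\gamma),\dot\gamma)$ is precisely the contribution of $f\,dt$ to $\mathcal L\alpha$, which follows from expanding $\alpha((r+r_0,\gamma),(1,\dot\gamma))$ using \eqref{alpha_comp} and the fact that the $dt$-slot of the velocity $(1,\dot\gamma)$ equals $1$. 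With these observations in place the lemma reduces to the Leibniz expansion described above.
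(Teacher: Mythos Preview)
Your argument is correct and is exactly the computation the paper has in mind; in fact the paper states Lemma~\ref{lem_Fourier_slice2} without proof, presumably because it is the direct tensorial analogue of the scalar identity~\eqref{int_identity} already worked out in Section~\ref{thm1section}. Your steps---Fourier transform in $r_0$ converts the shift $r+r_0$ into the phase $e^{\iota\tau r}$, then Leibniz in $\tau$ separates the $j=k$ term (giving $\mathcal I$) from the $j<k$ terms (giving $\mathcal R_{k-j}$)---are precisely what is needed.

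One cosmetic point: your parenthetical ``its $(1,\dot\gamma)$-lift restricts to the $S^m(M)$-lift $\dot\gamma^{\otimes m}$ up to the lower-order pieces already absorbed'' is more hedged than necessary. In the reduced form~\eqref{alpha_comp} both $f$ and $\omega$ are purely spatial tensors (sections of $S^{m-1}(M)$ and $S^m(M)$ parametrized by $t$), so there is nothing to absorb: one has exactly $\omega((1,\dot\gamma),\dots,(1,\dot\gamma))=\omega(\dot\gamma,\dots,\dot\gamma)$ and, since $dt(1,\dot\gamma)=1$, also $(f\,dt)((1,\dot\gamma),\dots,(1,\dot\gamma))=f(\dot\gamma,\dots,\dot\gamma)$. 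Stating this cleanly would remove any ambiguity.
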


We are now ready to prove the main theorem.
\begin{proof}[Proof of Theorem~\ref{t2}]
As discussed above, we can write $\alpha$ in the form \eqref{alpha_comp} with $f=f^s$. Now note that for any $(x,v) \in \p_{\text{in}} SM$ we have that $(y,w)\in \p_{\text{in}}SM$ as well, where
$$
y = \gamma(\tau_+(x,v); x,v), 
\quad
w = -\dot\gamma(\tau_+(x,v); x,v).
$$
Moreover, we have that $\mathcal I\omega(x,v) = \mathcal I\omega(y,w)$
for any $\omega \in C_c^\infty(M; S^m)$ with even $m$ but
$\mathcal I\omega(x,v) = -\mathcal I\omega(y,w)$
for any $f \in C_c^\infty(M; S^m)$ with odd $m$. 

Applying (\ref{Fourier_slice1}) with $k=0$ and using the above observation implies that
    \begin{align}\label{k1}
\mathcal I(\widehat f(0)) = 0,
\quad 
\mathcal I(\widehat \omega(0)) = 0.
    \end{align}
Using Hypothesis~\ref{hypo2} together with $f=f^s$ and Remark~\ref{remark_helmholtz} we deduce that 
    \begin{align*}
\widehat{f}(0) = 0, \quad
\widehat{\omega^\tfs}(0) = 0, \quad
\widehat{\omega^\t}(0) = 0.
    \end{align*}

Let us define 
    \begin{align}\label{def_h}
a_{0}(t,x) = \int_{-\infty}^t \omega^\t(t',x) \,dt'.
    \end{align}
As $\omega$ is compactly supported in time, $a_0(t)$ vanishes for $t$ sufficiently small. Moreover, for large $t$,
$$
a_{0}(t) = \int_{-\infty}^\infty \omega^\t(t') dt' = \widehat{\omega^\t}(0) = 0.
$$
Thus $a_{0} \in C_c^\infty(\R; C^\infty(M;S^{m-2}))$.
Observe also that $\p_t a_{0} = \omega^\t$ and hence $\iota \tau \widehat a_{0} = \widehat{\omega^\t}$. In particular,
    \begin{align*}
\p_\tau^k \widehat{\omega^\t}(0) = \iota k\p_\tau^{k-1}\widehat a_{0}(0), 
\quad k=0,1,\dots.
    \end{align*}

In what follows, we will write 
\begin{align*}
\omega = \omega^\tfs + \i\omega^\t + d^s a_1,
\quad 
a_1 = a_1^s + d^s h, 
\end{align*}
to denote the trace-free Helmholtz decomposition of $\omega$
and the Helmholtz decomposition of $a_1$ respectively.
We will use the fact that for any $u \in C_c^\infty(M;S^m)$,
    \begin{align*}
\mathcal R_j(d^s u)
&= 
\iota^j \int_0^{\tau_+} r^j\, d^su (\gamma(r),\, \dot \gamma(r))dr 
= 
\iota^j \int_0^{\tau_+} r^j \p_r (u(\gamma(r),\dot \gamma(r))) dr\\
&= -\iota j \mathcal R_{j-1}(u).
    \end{align*}
When $k=1$, equation (\ref{Fourier_slice1}) reduces to 
the fact that 
    \begin{align*}
&\mathcal I(\p_\tau \widehat{f})
+ \mathcal I(\p_\tau \widehat{\omega^\tfs} + \iota \widehat a_0) + \mathcal R_1(d^s \widehat a_1)
=
\mathcal I(\p_\tau \widehat{f})
+ \mathcal I(\p_\tau \widehat{\omega^\tfs} + \iota \widehat a_0) - \iota\mathcal I(\widehat a_1)
\\&\quad=
\mathcal I(\p_\tau \widehat{f} - \iota\widehat{a_1^s})
+ \mathcal I(\p_\tau \widehat{\omega^\tfs} + \iota \widehat  a_0 g)
    \end{align*}
vanishes at $\tau = 0$.
Note that $\p_\tau \widehat{f} - \iota\widehat{a_1^s}$ is solenoidal and of rank $m-1$. Moreover, the tensor $$w:=\p_\tau \widehat{\omega^\tfs} + \iota \widehat a_0g$$ is of rank $m$ and satisfies 
$$w^{\t}=\iota\widehat a_0g\quad \text{and}\quad w^{\tfs}=\p_\tau \widehat{\omega^\tfs}.$$
Hence at $\tau = 0$,
$$
\p_\tau \widehat{f} = \iota\widehat{a_1^s}, \quad
\p_\tau \widehat{\omega^\tfs} = 0, \quad
\widehat a_0 = 0.
$$

We will now proceed with an an induction argument to show that  
for all $j\in \N$, it holds at $\tau = 0$ that
\begin{align}
\label{ind_hyp}
\p_\tau^j \widehat{f} = \iota j\p_\tau^{j-1}\widehat{a_1^s}, \quad
\p_\tau^j \widehat{\omega^\tfs} = 0, \quad
\p_\tau^{j-1}\widehat a_0 = -\iota (j-1)\p_\tau^{j-2}\widehat h.
\end{align}
Indeed, let us suppose that this hypothesis holds for all $j=1,\ldots,k-1$. Together with (\ref{Fourier_slice1}) this implies that 
    \begin{align*}
&\mathcal I(\p_\tau^{k}\widehat{f}) 
+ 
\iota \sum_{j=0}^{k-1} \binom k j j \mathcal R_{k-j}(\p_\tau^{j-1}\widehat{a_1^s})
+
\mathcal I(\p_\tau^k\widehat{\omega^\tfs} + \iota k\p_\tau^{k-1}\widehat a_0)
\\&\quad
 + \sum_{j=0}^{k-1} \binom k j \mathcal R_{k-j}(\iota j\p_\tau^{j-1} \widehat a_0 + d^s\p_\tau^{j} \widehat a_1)
    \end{align*}
vanishes at $\tau = 0$.
As $a_1$ vanishes on $\R \times \p M$, we have
\begin{equation}\label{random_eq_2}    
\begin{aligned}
\mathcal R_{k-j}(d^s\p_\tau^{j} \widehat a_1)
&= -\iota (k-j) \mathcal R_{k-(j+1)}(\p_\tau^{j} \widehat a_1)\\
&= -\iota (k-j) \mathcal R_{k-(j+1)}(\p_\tau^{j} \widehat a_1^s) -\iota (k-j) \mathcal R_{k-(j+1)}(d^s\p_\tau^{j} \widehat h).
    \end{aligned}
\end{equation}
Next, using the identity
    \begin{align*}
-\iota \sum_{j=0}^{k-1}  \frac{k!}{j! (k-j)!} (k-j) 
\mathcal R_{k-(j+1)}(\p_\tau^{j} \widehat{a_1^s})
= 
-\iota \sum_{j=1}^{k} \binom k j j \mathcal R_{k-j}(\p_\tau^{j-1} \widehat{a_1^s}),
    \end{align*}
together with \eqref{random_eq_2}, we see that 
    \begin{align*}
&\mathcal I(\p_\tau^{k}\widehat{f}) 
+
\mathcal I(\p_\tau^k\widehat{\omega^\tfs} + \iota k\p_\tau^{k-1}\widehat a_0)
- \iota k \mathcal I (\p_\tau^{k-1} \widehat{a_1^s})
\\&\quad
 + \sum_{j=0}^{k-1} \binom k j \mathcal R_{k-j}(\iota j\p_\tau^{j-1} \widehat a_0)
 -\iota \sum_{j=0}^{k-1} \frac{k!}{j! (k-(j+1))!} \mathcal R_{k-(j+1)}(d^s \p_\tau^{j} \widehat h)
    \end{align*}
vanishes at $\tau = 0$.
As $h$ vanishes on $\R \times \p M$, we have for $j = 0, \dots, k - 2$,
    \begin{align*}
\mathcal R_{k-(j+1)}(d^s\p_\tau^{j} \widehat h)
= -\iota(k-(j+1)) \mathcal R_{k-(j+2)}(\p_\tau^{j} \widehat h),
    \end{align*}
and for $j=k-1$,
    \begin{align*}
\mathcal R_{k-(j+1)}(d^s\p_\tau^{j} \widehat h)
= \mathcal I(d^s\p_\tau^{j} \widehat h) = 0.
    \end{align*}
We rewrite 
    \begin{align*}
&-\iota \sum_{j=0}^{k-1} \frac{k!}{j! (k-(j+1))!} \mathcal R_{k-(j+1)}(d^s \p_\tau^{j} \widehat h)
= 
\sum_{j=0}^{k-2} \frac{k!}{j! (k-(j+2))!}
\mathcal R_{k-(j+2)}(\p_\tau^{j} \widehat h)
\\&\quad
=
\sum_{j=2}^{k} \frac{k!}{(j-2)! (k-j)!}
\mathcal R_{k-j}(\p_\tau^{j-2} \widehat h),
    \end{align*}
and, using $\p_\tau^{j-1}\widehat a_0 = -\iota (j-1)\p_\tau^{j-2}\widehat h$,
    \begin{align*}
\sum_{j=0}^{k-1} \binom k j \mathcal R_{k-j}(\iota j\p_\tau^{j-1} \widehat a_0)
= \sum_{j=2}^{k-1} \frac {k!}{(j-2)!(k-j)!} \mathcal R_{k-j}(\p_\tau^{j-2}\widehat h).
    \end{align*}
Therefore
    \begin{align*}
&\mathcal I(\p_\tau^{k}\widehat{f}) 
+
\mathcal I(\p_\tau^k\widehat{\omega^\tfs} + \iota k\p_\tau^{k-1}\widehat a_0)
- \iota k \mathcal I (\p_\tau^{k-1} \widehat{a_1^s})
+ k(k-1) \mathcal I(\p_\tau^{k-2} \widehat h)
    \end{align*}
vanishes at $\tau = 0$. We obtain at $\tau = 0$,
    \begin{align*}
\p_\tau^{k}\widehat{f} = \iota k \p_\tau^{k-1} \widehat{a_1^s},
\quad 
\p_\tau^k\widehat{\omega^\tfs} = 0,
\quad \iota \p_\tau^{k-1}\widehat a_0 = (k-1) \p_\tau^{k-2} \widehat h,
    \end{align*}
and this closes the induction argument.

We can now use \eqref{ind_hyp} to deduce that 
$$ f =\p_t a_1^s,\quad \omega^\tfs = 0,\quad a_0=-\p_t h.$$ 
To see this, recall that since the functions $f$, $a_1^s$, $\omega^\tfs$, $a_0$ and $h$ are compactly supported in time, their Fourier transforms in $t$ are real analytic. Hence:
$$\hat{f}(\tau,\cdot)= \sum_{k=0}^{\infty} \p^k_\tau \hat f(0,\cdot) \frac{\tau^k}{k!}=\iota\tau\sum_{k=0}^{\infty} \p^k_\tau \widehat{a_1^s}(0,\cdot) \frac{\tau^k}{k!}$$
implying that $f=\p_t a_1^s$. The other two claims follow similarly. Recalling also that $\omega^\t=\p_t a_0$, equation \eqref{alpha_comp} can be rewritten as
$$\alpha = \p_t a_1^s\, dt + d^s a_1 + \p_t a_0 \,g.$$ 
This expression can be further simplified to obtain 
\begin{align}
\label{eq_f}
\alpha = \bar{d}^s\underbrace{(a_0\,dt+a_1)}_{T}+\underbrace{(\p_ta_0)}_{U}\,\bar{g},
\end{align}
thus concluding the proof of the theorem.
\end{proof}    


\medskip\paragraph{\bf Acknowledgements}
A.F was supported by EPSRC grant EP/P01593X/1, J.I. was supported by the Academy of Finland (decision 295853) and L.O was supported by the EPSRC grants EP/P01593X/1 and EP/R002207/1. The authors thank Steve Zelditch for the suggestion to study the
light ray transform on stationary globally hyperbolic Lorentzian manifolds.

\bibliographystyle{abbrv}

\ifoptionfinal{}{
}
\end{document}